\def\pplogo{\vbox{\kern-\headheight\kern -15pt
		\halign{##&##\hfil\cr&{
				\ppnumber}\cr\rule{0pt}{2.5ex}&\ppdate\cr} }} \makeatletter
\def\ps@firstpage{\ps@empty \def\@oddhead{\hss\pplogo}%
	\let\@evenhead\@oddhead 
	
}
\renewcommand{\thetable}{\@Alph\c@table}
\newtheorem*{theorem*}{Theorem}
\DeclareFontFamily{OT1}{rsfs10}{}
\DeclareFontShape{OT1}{rsfs10}{m}{n}{ <-> rsfs10 }{}
\DeclareMathAlphabet{\mathscript}{OT1}{rsfs10}{m}{n}
\def\ppnumber{\vbox{\baselineskip16pt }}
\def\ppdate{}
\date{} 
\newsavebox{\fmbox}
\author{Antonella Grassi}
\title[Bounds]{Bounds}
\address{
	Dipartimento di Matematica, Universit\`a di Bologna, Italy}
 \email{antonella.grassi3@unibo.it}
 \address{INFN, Sezione di Bologna,  Italy}
\address{
 Department of Mathematics, University of Pennsylvania, Philadelphia, PA, USA}
 \email{grassi@math.upenn.edu}
\theoremstyle{plain}
\newtheorem{thm}{Theorem}
\newtheorem{result}[thm]{Result (Physics)}
\newtheorem{corollary}[thm]{Corollary}
\newtheorem{proposition}[thm]{Proposition}
\newtheorem{conjecture}[thm]{Conjecture}
\newtheorem{expectation}[thm]{Expectation (Physics)}
\newtheorem{hc}[thm]{Charged Hypermultiplets}
\theoremstyle{definition}
\newtheorem{definition}[thm]{Definition}
\newtheorem{ex}[thm]{Example}
\theoremstyle{remark}
\newtheorem{remark}[thm]{Remark}
\title[Spectrum Bounds in Geometry]{Spectrum Bounds in geometry}
\numberwithin{thm}{section}
\begin{document}
	\dedicatory{To V. V. Shokurov for his 70+2 birthday}

	
	
	\date{\today}

	\begin{abstract} 
Filipazzi, Hacon and Svaldi  proved that there are only finitely many  topological types of elliptically fibered Calabi-Yau threefolds.  We explore the implications of their results on the boundedness of the geometric quantities in the massless spectrum 
of  
the F-theory Calabi-Yau compactifications. A key ingredient is what we call the geometric anomaly equation, and extension of the gravitational anomaly cancellation in physics, also to singular spaces.  We review and extend the dictionary between geometry and physics. We conclude with  explicit bounds.
	\end{abstract}

	\maketitle

\section{Introduction}\label{sec:Intro}
Recently Filipazzi, Hacon and Svaldi \cite{Filipazzi:2021dcw} have shown that there are only  finitely many types of  topologically different elliptically fibered Calabi-Yau threefolds, answering a long standing question from physics.  We explore the implications of their results and the previous work of  Gross \cite{Gross1994} as well as \cite{Grassi1991} on the boundedness of the geometric quantities in the massless spectrum 
 of  
the F-theory Calabi-Yau compactifications.  The results build also on the work of Prokhorov and Shokurov \cite{PS09}.

An F-theory compactification  on a  Calabi-Yau variety $X$ is  an elliptic fibration  $X \to B$.  From now on, the threefolds are intended to be complex projective Calabi-Yau varieties. An important question coming from string theory is the boundedness of the massless spectrum of  the F-theory 
compactifications, also in the context of the ``Swampland Program" \cite{Vafa:2005ui}.
The geometry of the Calabi-Yau, the structure of the fibration and the  massless particle spectrum in physics are closely related.
In  Result   from Physics  \ref{thm:SpectrumP},  (1)-(4)   we review the descriptions of the spectrum for F-theory compactifications.  The dictionary between the quantities $(1$)-$(2$)-$(3)$ 
    in the physics spectrum and the geometry 
  is  immediate. The geometric counterparts of the quantity  (4),  the charged hypermultiples $H_{ch}$,  are discussed in  Section  \ref{sub:Hch}. In the same section we also outline  how to calculate explicitly some of the other geometric quantities in the spectrum,  with some relevant examples  in the Appendix  \ref{subsec:examples}.
 We extend the dictionary  first to  Calabi-Yau varieties with $\mathbb{Q}$-factorial terminal singularities with Definition \ref{def:specSing}--(3')  and then to  singular bases $B$ 
  with isolated multiple fibers  over the singularities of  $B$, by adding  (5) to the  (modified) spectrum  in Definition \ref{def:spectrum:quotients} ; see also \cite{ArrasGrassiWeigand}, \cite{GrassiWeigand2}. 
These additions to the massless spectrum are consistent with the findings in physics; we show also that they are natural from the geometry of the Calabi-Yau. 
In Corollary \ref{cor:bound12smooth}, Corollary \ref{cor:boundsHsing} and Corollary \ref{cor:boundsIII}  
 we  show the  boundedness of the quantities in  ($1)$-$(2)$-$(3)$-$(3')$-$(5)$  in the spectrum.  
 
 To prove the boundedness of the quantity (4) in  the spectrum \ref{thm:SpectrumP}, 
 the charged multiples $H_{ch}$,   we use  the Gravity Anomaly Cancellations and its extensions (Corollaries \ref{cor:boundIan}, \ref{cor:boundsan2} and \ref{cor:boundsIIIanom}).  (In the physics theory the Gravitational Anomaly Cancellation Formulas 
  must in fact  be satisfied  by  the quantities in the spectrum for the theory to be consistent.)
However, the Gravitational Anomaly Cancellation \ref{prop-anomalies-gen} was  originally derived in physics   for fibrations with section between manifolds. We extend it in   Proposition \ref{prop:anomequ} and Theorem \ref{prop:anomequm} to  Calabi-Yau  with $\mathbb{Q}$-factorial terminal singularities and then to equidimensional  fibrations with isolated multiple fibers; see also \cite{GrassiWeigand2}.

 In Corollary \ref{cor:boundsIV}  and  Corollary \ref{cor:boundsV}   we apply the boundedness of the spectrum   and a geometric formulation of the anomaly cancellation to deduce boundedness of the type of singular fibers (and of the matter representations) and the components of the discriminant.
We conclude with 
explicit bounds 
in Section \ref{subsec:ExplBounds}.



\section{Elliptic  Fibrations and F-theory}\label{sec:elliptic}
All the varieties are assumed to be complex projective algebraic.
\begin{definition} A   Calabi-Yau  threefold $X$ is a variety 
	 with $\mathbb{Q}$-factorial terminal singularities,  $h^i(X, {\mathcal O}_X)=0 $, $  i  = 1,2, $ and $K_X \sim {0}$.
\end{definition}
\begin{definition} An elliptic   fibration is a morphism $\pi : X\rightarrow B$ whose fibers over a dense set in $B$ are elliptic 
	curves. The complement of this dense locus in $B$, the discriminant of the fibration, is  denoted by  $\Sigma_{X/B }$.
Let 	$\{\Sigma_j \}$ be the irreducible codimension one components of  $\Sigma_{X/B }$.
Let  $\Sigma_m\stackrel{def}= \ \{ Q \in B \text{ such that }   \pi   ^{-1} (Q)  \ \text{ is a multiple fiber}\} $
 
\end{definition}

\begin{thm}\label{thm:birminmodel}
	Let $Y$ be a birational Calabi-Yau threefold  and $
	Y \rightarrow S$ be an elliptic fibration. Then
	there exists a birational equivalent elliptic  fibration $  \pi :  X  \to B $ such that $ X $ is minimal, with  $\mathbb{Q}$-factorial terminal singularities and 
	\begin{enumerate}
		
		\item 
			$( B ,   \Delta )$ has $\mathbb{Q}$-factorial klt singularities,  $K_{ X }  \equiv  {\pi}^*(K_{ B } +   \Delta )$ and 
			$  \Delta  $ is supported on  the discriminant  of the fibration.
			\item   $ \pi :  X  \to B $
		is 
			equidimensional.
			
			
\item  The locus $\Sigma_m$ 
 consists of a set of points.

\item If $ B $ is singular at $Q$  then $Q \in \Sigma_m$ and the singularity is  a rational double point of type $A_n$.

\item If $  \Delta  \neq 0$, $ B $ is rational.

\item  If $  \Delta  = 0$ the singularities of $ B $  are rational double points, the minimal resolution of $ B $ is an Enriques surface,  the fibration $  \pi  $ is isotrivial.
			
		\end{enumerate}
	\end{thm}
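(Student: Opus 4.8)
The plan is to construct $X$ by the minimal model program and then to read the structure of $B$ off the canonical bundle formula. First I would run the MMP on $Y$: since $Y$ has Kodaira dimension zero the program terminates with a minimal model $X$, which is $\mathbb{Q}$-factorial terminal with $K_X \sim 0$ (numerical triviality is upgraded to $K_X \sim 0$ by abundance together with the Calabi-Yau conditions). The birational map $Y \dashrightarrow X$ carries the elliptic structure to a dominant rational map $X \dashrightarrow S$; resolving its indeterminacy and running a relative MMP over a suitable birational model of $S$ yields an honest elliptic fibration $\pi \colon X \to B$ with $X$ still minimal.

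For (1) I would apply Kodaira's canonical bundle formula in the generalized, klt-trivial form of \cite{PS09} (see also Fujino--Mori): $K_X \sim_{\mathbb{Q}} \pi^*(K_B + \Delta)$, where $\Delta$ assembles the discriminant contribution and the moduli part and is therefore supported on $\Sigma_{X/B}$. Because $K_X \sim 0$ and $\pi$ has connected fibres, $\pi^*$ is injective on numerical classes, so $K_B + \Delta \equiv 0$. The semistable-reduction coefficients attached to the Kodaira fibre types put the coefficients of $\Delta$ in $[0,1)$ and make the moduli part nef, so the adjunction results of \cite{PS09} give that $(B,\Delta)$ is $\mathbb{Q}$-factorial klt, which is (1).

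The heart of the argument is (2)--(4), and equidimensionality is the main obstacle. Working over a \emph{surface} base, I would remove every divisorial fibre (a two-dimensional fibre over a point) by a relative MMP combined with a birational modification of $B$, in the spirit of \cite{Grassi1991}, \cite{Gross1994} and the adjunction machinery of \cite{PS09}, arriving at an equidimensional $\pi$; this is (2). In such a model the fibre over the generic point of each component of $\Sigma_{X/B}$ is reduced, so multiple fibres can occur only over points and $\Sigma_m$ is zero-dimensional, which is (3). The base $B$ may acquire singularities in the process, but they are klt by (1), hence rational; a local analysis over each such point, in which terminality of $X$ and the local monodromy restrict the admissible fibre, shows that the fibre is multiple and that the base singularity is a rational double point of type $A_n$, giving (4).

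Finally (5) and (6) follow from the classification of the log Calabi-Yau surface $(B,\Delta)$ coming from $K_B + \Delta \equiv 0$. If $\Delta \neq 0$ then $-K_B \equiv \Delta$ is effective and nonzero, so $B$ is a rational surface, which is (5). If $\Delta = 0$ then $K_B \equiv 0$ and the moduli part vanishes, so the $j$-invariant is constant and $\pi$ is isotrivial; the minimal resolution of $B$ is then a minimal surface with numerically trivial canonical class, and the Calabi-Yau constraints $h^1(X,\mathcal{O}_X) = h^2(X,\mathcal{O}_X) = 0$ exclude the abelian, $K3$ and bielliptic cases, leaving an Enriques surface, which is (6).
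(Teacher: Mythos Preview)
Your outline is the same strategy the paper uses: the paper's proof is a list of citations (\cite{Grassi1991}, \cite{GrassiSingBase1993}, \cite{GrassiEqui} for (1)--(2) and (5)--(6); Kodaira's canonical bundle formula for surfaces together with $K_X\simeq\mathcal O_X$ for (3); \cite{GrassiSingBase1993}, \cite{Grassi1991}, \cite{Gross1994} for (4)), and your sketch unpacks exactly those ingredients --- MMP to reach a minimal model, the klt canonical bundle formula for (1), the equidimensional reduction of \cite{GrassiEqui} for (2), local analysis for (4), and the surface classification for (5)--(6).

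Two steps, however, are asserted rather than argued and, as written, do not go through. In (3) you claim that in the equidimensional model the fibre over the generic point of each $\Sigma_j$ is reduced; nothing in the construction forces this, and it is precisely the content of (3). The argument the paper points to is: choose a general smooth curve $C\subset B$ transverse to $\Sigma_j$, set $S=\pi^{-1}(C)$, and compare Kodaira's formula $K_S\sim(\pi|_S)^*(K_C+D)+\sum(m_i-1)F_i$ with adjunction $K_S=(K_X+S)|_S\sim(\pi|_S)^*N_{C/B}$, which is an \emph{integral} pullback because $K_X\sim 0$. Since $(m-1)F$ is never linearly equivalent to a pullback when $m>1$, all $m_i=1$ and $\Sigma_m$ is zero--dimensional. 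In (5) the implication ``$-K_B\equiv\Delta$ effective nonzero $\Rightarrow$ $B$ rational'' is false in general (e.g.\ $\mathbb P^1\times E$ with $E$ elliptic); you must also invoke $q(B)=0$, which follows from $h^1(X,\mathcal O_X)=0$ via Leray, and then conclude by Castelnuovo. With these two points supplied your argument coincides with the one in the cited references.
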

\begin{proof}
(1)-(2) and (5)-(6)  are proved in previous works of the author \cite{Grassi1991}, \cite{GrassiSingBase1993} and  \cite{GrassiEqui}.  (3) follows from  Kodaira's canonical divisor formula for surfaces and $K_{ X } \simeq {\mathcal O}_{ X }$.
(4) follows form \cite{GrassiSingBase1993}, \cite{Grassi1991} and  \cite{Gross1994}.
\end{proof}
	

The converse of the statement (4) does not hold: The analysis of the Example B  in Appendix B of \cite{Oehlmann:2019ohh} gives a counterexample, namely there is an elliptic fibration $\pi:  X \to B$, $X$ and  $B$ smooth, $Q \in \Sigma_m$ and in a divisorial component of $\Sigma$.

\subsection{F-theory, Massless spectrum,  Gravitational Anomaly, for manifolds}\label{FtheoryAnomalies}~

\smallskip

F-theory compactifications were first constructed on elliptic Calabi-Yau manifolds with a section\footnote{In the physics literature an  elliptic fibration is assumed to be an elliptic fibration with a section, otherwise 
	it called a genus one fibration. }  \cite{Vafa:1996xn, Morrison:1996na, Morrison:1996pp}. When there is a section there are no multiple fibers and the base of the fibration $B$ can be assumed to be birationally  smooth \ref{thm:birminmodel}. The physics theory with  the presence of multiple fibers is still being understood. We discuss the multiple fibers in 
Section \ref{subsec:multf}.

\begin{result}[Gauge algebra] Let $X \to B$ be an equidimensional elliptic fibration  with $X$ a  smooth  Calabi-Yau threefold. 
	The gauge algebra of the F-theory compactified to $ X\to B$   is a naturally associated 
	algebra   
	$\mathfrak g  = \bigoplus_{j} \mathfrak{g}(\Sigma_j) \oplus \mathfrak{u}(1)^{\oplus r}$
	where
	\begin{itemize}
				\item $\mathfrak{g}(\Sigma_j)$ are  either  semisimple Lie algebras or  "twisted  algebras", which are quotients of  the extended (affine) semisimple Lie algebras,
		\item 
		the sum is taken over the irreducible divisorial components $\{\Sigma_j\} $ of the discriminant locus, 
		\item $r$ is the rank of the  Mordell-Weil group of sections of the elliptic fibration. 	\end{itemize}

	\end{result}

 $\mathfrak{u}(1)^{\oplus r}$ is called  the  abelian component and $\bigoplus_j \mathfrak{g}(\Sigma_j) $ the non-abelian component of $ \mathfrak{g}$. We write ${\mathfrak g}_j\stackrel{def}=\mathfrak{g}(\Sigma_j)$.

\begin{remark}\textbf{(Geometry)}
There are several proposed  methods to determine the algebras $\mathfrak{g}(\Sigma_j) $ with mathematical constructions.  If the fiber over the general point of $\Sigma_j$ is a cusp or a node ${\mathfrak g}(\Sigma_j)= \{e\}$.
If the fibration has a section $\mathfrak{g}(\Sigma_j) $ is either the simply laced Lie algebra associated to the  Kodaira fiber over the general point of $\Sigma_j$, or  the non-simply laced ones,   a quotient by an outer automorphism of the simply laced ones if  the rational curves in the general fiber of $\Sigma_j$  have a relation  in the Mori cone $\operatorname{NE}(X)$ of effective curves.  If the fibration does not have a section,  \textit{twisted} algebras might also   occur   in the presence of multiple fibers, again when   the rational curves in the general fiber of $\Sigma_j$  are dependent   in 
$\operatorname{NE}(X)$. The twisted algebra are  then counted with multiplicity, \cite{BraunMorrison} and \cite{Kohl:2021rxy}.
 We present  and review different constructions and properties in \cite{GrassiWeigand2}. 
\end{remark}

  By duality, the construction implies that the following  always hold:
\begin{result}
$\sum_j \operatorname{rk} \mathfrak{g}(\Sigma_j) < \operatorname{rk} (\operatorname{Pic}(X)) 
$.
\end{result}
\begin{result}[Gauge algebra and spectrum]\label{thm:SpectrumP} Let $X \to B$ be an equidimensional elliptic fibration  with $X$ a smooth  Calabi-Yau threefold and $B$ smooth. 
	The massless physical spectrum of the 
	gauge algebra $\mathfrak{\mathfrak{g}}$ of the F-theory compactified on $X$ 
consists of 
	\begin{enumerate}
		\item  $V=h^{1,1}(X)-h^{1,1}(B)-1 + \dim {\mathfrak g} - \operatorname{rk} {\mathfrak g}$ vector multiplets,
		\item $T= h^{1,1}(B)-1 $ tensor multiplets,
		\item $H = H_{unch} + H_{ch}$ hypermultiplets, where  $H_{unch} = h^{2,1}(X) +1= \frac{1}{2} b_3(X)$  is the number of uncharged multiplets and $H_{ch}$ the number of hypermultiplets charged under $\mathfrak{g}$, 
		\item  one universal gravity multiplet.
	\end{enumerate}  

\end{result}

\begin{remark}\textbf{(Geometry)}  After ${\mathfrak g}$ is determined, the quantities in the massless spectrum have immediate  geometric counterparts, with the exception of the charged multiplets $H_{ch}$.
They
 are 
 charged by  the gauge algebra ${\mathfrak g}$.  In the geometry  of $F$-theory compactifications, the  hypermultiplets $H_{ch}$ are  associated to the singular fibers of the fibration (see Section \ref{sub:Hch} and \cite{GrassiWeigand2}).
The general definition and construction of $H_{ch}$ in physics and in mathematics 
is still largely an open question. We refer  for example  to \cite{GrassiWeigand2} \cite{Weigand:2018rez}  for a review of different mathematical constructions. We summarize the main points in Section \ref{sub:Hch}.
 \end{remark} 

	\begin{corollary}[Bounds, I]\label{cor:bound12smooth}  Let $X \to B$ be  an equidimensional elliptic fibration  with $X$ smooth  Calabi-Yau threefold and $B$ smooth.
	The quantities (1), (2) and $H_{unch}$ in (3)   of the massless spectrum are bounded.
\end{corollary}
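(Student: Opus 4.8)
The engine is the Filipazzi--Hacon--Svaldi finiteness theorem \cite{Filipazzi:2021dcw}, that there are only finitely many topological types of elliptically fibered Calabi--Yau threefolds. I would first observe that a Calabi--Yau threefold has $h^{1,0}=h^{2,0}=0$, so its homeomorphism type already determines all Hodge numbers through $h^{1,1}(X)=b_2(X)$ and $h^{2,1}(X)=\tfrac12 b_3(X)-1$. Consequently $h^{1,1}(X)$, $h^{2,1}(X)$ and $b_3(X)$ take only finitely many values across the whole class and are each bounded by an absolute constant. In particular $H_{unch}=h^{2,1}(X)+1=\tfrac12 b_3(X)$ is bounded, settling the third quantity at once.

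For the tensor multiplets the plan is to bound $h^{1,1}(B)$ by $h^{1,1}(X)$. Since $\pi\colon X\to B$ is a surjective morphism of smooth projective varieties, the pullback $\pi^{*}\colon H^{2}(B,\mathbb{Q})\to H^{2}(X,\mathbb{Q})$ is an injective morphism of Hodge structures; passing to $(1,1)$-parts yields an injection $H^{1,1}(B)\hookrightarrow H^{1,1}(X)$ and hence $h^{1,1}(B)\le h^{1,1}(X)$. (Alternatively, Theorem~\ref{thm:birminmodel} forces the smooth base $B$ to be rational or Enriques, which already bounds its topology.) Either way $T=h^{1,1}(B)-1$ is bounded.

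Only $V$ involves the gauge algebra, and the key observation is that its abelian part is harmless. Decomposing the non-abelian part of $\mathfrak g=\bigoplus_j\mathfrak g(\Sigma_j)\oplus\mathfrak u(1)^{\oplus r}$ into simple ideals $\mathfrak s_1,\dots,\mathfrak s_N$, I would use
\[
\dim\mathfrak g-\operatorname{rk}\mathfrak g=\sum_{k=1}^{N}\bigl(\dim\mathfrak s_k-\operatorname{rk}\mathfrak s_k\bigr),
\]
so that the Mordell--Weil rank $r$ and the trivial factors drop out. Because $h^{2,0}(X)=0$ gives $\operatorname{rk}\operatorname{Pic}(X)=h^{1,1}(X)$, the Result on ranks preceding \ref{thm:SpectrumP} yields $\sum_k\operatorname{rk}\mathfrak s_k<h^{1,1}(X)$ and in particular $N<h^{1,1}(X)$. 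Each $\mathfrak s_k$ is simple (possibly twisted/non-simply-laced) and drawn from the finite list attached to Kodaira fibers, so its number of roots $\dim\mathfrak s_k-\operatorname{rk}\mathfrak s_k$ is at most $2(\operatorname{rk}\mathfrak s_k)^2$ on the classical series and a fixed constant ($\le 240$) for the finitely many exceptional types. Summing then gives
\[
\dim\mathfrak g-\operatorname{rk}\mathfrak g\le 2\Bigl(\sum_k\operatorname{rk}\mathfrak s_k\Bigr)^2+240\,N<2\,h^{1,1}(X)^2+240\,h^{1,1}(X),
\]
whence $V=h^{1,1}(X)-h^{1,1}(B)-1+(\dim\mathfrak g-\operatorname{rk}\mathfrak g)$ is bounded.

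The main obstacle is exactly this control of $\dim\mathfrak g$ inside $V$: a priori both the number $N$ of simple factors and their individual dimensions could grow without bound. The cancellation of the abelian rank and the Picard-rank inequality bound $N$ and the total rank, while the classification of simple Lie algebras turns that rank bound into a dimension bound. I expect the one point needing genuine care to be the twisted algebras occurring when the fibration has no section: I would check that these remain finite-dimensional with dimension a bounded function of rank, as they arise from the same finite set of fiber types, which is the only input outside the classical ADE bookkeeping.
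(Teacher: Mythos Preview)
Your argument is correct and follows essentially the same route as the paper: Filipazzi--Hacon--Svaldi boundedness controls the Hodge numbers of $X$, and the Picard-rank inequality $\sum_j\operatorname{rk}\mathfrak g(\Sigma_j)<\operatorname{rk}\operatorname{Pic}(X)$ (the paper cites \cite{Gross1994} here) together with the finite Kodaira list bounds $\dim\mathfrak g-\operatorname{rk}\mathfrak g$ and hence $V$. The only real difference is in how $h^{1,1}(B)$ is handled: the paper invokes the full strength of \cite{Filipazzi:2021dcw}, which puts the \emph{fibration} $X\to B$ in a bounded family and so bounds the topology of $B$ directly, whereas you deduce $h^{1,1}(B)\le h^{1,1}(X)$ from injectivity of $\pi^{*}$ on $H^2$. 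Your route is slightly more self-contained; the paper's is shorter. Your explicit estimate $\dim\mathfrak g-\operatorname{rk}\mathfrak g<2\,h^{1,1}(X)^2+240\,h^{1,1}(X)$ is a welcome unpacking of what the paper leaves implicit in the single phrase ``because $\operatorname{rk}(\operatorname{Pic}(X))$ is bounded''.

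One correction: your parenthetical alternative, that Theorem~\ref{thm:birminmodel} ``forces the smooth base $B$ to be rational or Enriques, which already bounds its topology'', is false as stated. Rationality alone does not bound $h^{1,1}(B)$ --- blow-ups of $\mathbb P^2$ give rational surfaces of arbitrarily large Picard rank --- so this side remark cannot replace the pullback argument. Since your primary argument via $\pi^{*}$ is valid, this does not affect the proof, but drop the alternative.
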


\begin{proof}
By 	
 \cite{Filipazzi:2021dcw}, the equidimensional model $X \to B$ is isomorphic to a fiber in a bounded family $\mathcal X$;  $X \simeq X_t$,  and $B \simeq S_t$, for some $t$ in a bounded family $\mathbf{S}$.   $T$ is bounded and so is $V$, because the  $\operatorname{rk} (\operatorname{Pic}(X))$  is bounded \cite{Gross1994}. It follows also that  $H_{unch}$  is bounded.
\end{proof}

\smallskip

The 6-dimensional effective theory obtained by compactification of  F-theory on $X$  is consistent if the anomalies are satisfied. In particular 

\begin{result} [Gravitational Anomalies {\cite{Green:1984bx}}]\label{prop-anomalies-gen}
	Let $X \to B$ be an equidimensional elliptic fibration   {without multiple fibers}, with $X$  a smooth Calabi-Yau threefold, $B$ smooth  and rational. 	The Gravitational 
Anomalies are cancelled by the 6-dimensional Green-Schwarz mechanism if the following equation holds:
	\begin{eqnarray*}
		H -V +29T=273
	\end{eqnarray*}
where $H, V$ and $T$ are as in \ref{thm:SpectrumP}.
\end{result}


	

Corollary \ref{cor:bound12smooth} then implies:
\begin{corollary}[Bounds, I']\label{cor:boundIan} 
		Let $X \to B$ be an equidimensional elliptic fibration   {without multiple fibers}, with $X$  a smooth Calabi-Yau threefold, $B$ smooth  and rational. Assume that the Gravitational Anomalies in  \ref{prop-anomalies-gen} are cancelled. Then $H_{ch}$  is bounded.
	\end{corollary}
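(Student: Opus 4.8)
The plan is to treat this as essentially a linear-algebra consequence of the anomaly equation, with all the substantive work already carried out in Corollary~\ref{cor:bound12smooth}. The gravitational anomaly cancellation of Result~\ref{prop-anomalies-gen} gives the single relation
\begin{equation*}
H - V + 29T = 273,
\end{equation*}
and by hypothesis this holds for our fibration $X \to B$. Since the hypermultiplet count splits as $H = H_{unch} + H_{ch}$ (see Result~\ref{thm:SpectrumP}), I would first substitute this decomposition and solve for the charged contribution, obtaining
\begin{equation*}
H_{ch} = 273 + V - 29T - H_{unch}.
\end{equation*}

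The second and final step is to invoke Corollary~\ref{cor:bound12smooth}, which asserts that $V$, $T$, and $H_{unch}$ are each bounded over the relevant family (the hypotheses of that corollary---$X$ smooth Calabi--Yau, $B$ smooth, equidimensional fibration---are all implied here, with $B$ additionally rational). Because $H_{ch}$ is now written as a fixed integer combination of three bounded quantities, it is itself bounded, and the conclusion follows immediately. No further geometric input is needed once the anomaly relation is in hand.

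The point I would stress is that the apparent triviality of the calculation conceals where the real content lies. Bounding $H_{ch}$ \emph{directly} from geometry---that is, controlling the charged matter localized at the singular fibers and at the intersections of discriminant components---is genuinely hard, and is precisely the quantity whose intrinsic definition is noted in the preceding remark to be ``largely an open question.'' The anomaly cancellation hypothesis is doing all the heavy lifting: it trades this difficult geometric count for a linear expression in quantities already tamed by the Filipazzi--Hacon--Svaldi boundedness result and the Gross bound on $\operatorname{rk}(\operatorname{Pic}(X))$. Consequently the only ``obstacle'' is verifying that the hypotheses of Corollary~\ref{cor:bound12smooth} are satisfied under the stated assumptions, which is immediate since the present hypotheses are strictly stronger (they add rationality of $B$ and the absence of multiple fibers).
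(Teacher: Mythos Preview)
Your proposal is correct and matches the paper's own argument exactly: the paper states the corollary as an immediate consequence of Corollary~\ref{cor:bound12smooth} (it literally prefaces the statement with ``Corollary~\ref{cor:bound12smooth} then implies:'' and gives no further proof). Your write-up simply makes explicit the one-line substitution $H_{ch} = 273 + V - 29T - H_{unch}$ that the paper leaves to the reader.
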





\section{Massless spectrum,  \texorpdfstring{$H_{ch}$}, The geometric anomaly equation and singularities}\label{sec:GeomAnom}

We turn now our attention to the geometric interpretation and the extension of the anomaly cancellation formula to Calabi-Yau varieties with $\mathbb{Q}$-factorial terminal singularities. The motivation is twofold: the first one is  to describe in geometric terms $H_{ch}$, the second to derive the (extended) anomaly cancellation formula  consistent for mathematics and physics when  $X$ and  
$B$ are also singular, as in Theorem \ref{thm:birminmodel}.


  In  \cite{ArrasGrassiWeigand} and \cite{GrassiWeigand2} we show that (3) in  the 
 	massless spectrum \ref{thm:SpectrumP}
   must be modified 
   for Calabi-Yau with $\mathbb{Q}$-factorial terminal singularities.
Gorenstein terminal singularities are isolated hypersurface  singularities and we can define the Milnor number:
\begin{definition} Let
	$(\mathcal U, 0)\subset \mathbb{C}^{n+1}$ be a neighborhood of an isolated hypersurface singularity $P=0$,  defined by ${f=0}$. The Milnor number of $P$  can be defined as
	\begin{align*}
		m(P) =\dim_{\mathbb C}( \mathbb C\{ x_1, \ldots, x_{n+1} \}/{\scriptstyle< f, \frac{\partial f}{\partial x_1}, \ldots,   \frac{\partial f}{\partial x_{n+1}} } >).\,
	\end{align*}
\end{definition}
  Also: $ \mathrm{CxDef}(X) +1 = \frac{1}{2} (b_3(X)+\sum_P m(P)) $  \cite{NamikawaSteenbrink}. Then: 
  
  \begin{definition}\label{def:specSing}[Massless Spectrum with $\mathbb{Q}$-factorial terminal singularities]	Let $X \to B$  be an equidimensional elliptic fibration    {without multiple fibers}, with $X$ a Calabi-Yau threefold with $\mathbb{Q}$-factorial terminal singularities and $B$ smooth. 
   (3) in  Massless Spectrum  \ref{thm:SpectrumP}  is replaced by 
\begin{eqnarray*}
(3') \quad 	H_{unch} \stackrel{def}=\mathrm{CxDef}(X) +1 = \frac{1}{2} (b_3(X)+\sum_P m(P)) ,
\end{eqnarray*}
where $m(P)$ is the Milnor number of the hypersurface singularity at $ \in X$.
\end{definition}

The extended version of Corollary \ref{cor:bound12smooth} holds:
 \begin{corollary}[Bounds,  II]\label{cor:boundsHsing}
	Let $X \to B$ be an equidimensional elliptic fibration    {without multiple fibers}, with $X$ a Calabi-Yau threefold with $\mathbb{Q}$-factorial terminal singularities and $B$ smooth. 
	The quantities (1), (2) and 
	$(3') $ 
	 of the  extended massless spectrum are bounded. 
 \end{corollary}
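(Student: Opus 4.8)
The plan is to run the proof of Corollary~\ref{cor:bound12smooth} with the topological term $\tfrac{1}{2}b_3(X)$ replaced by the deformation-theoretic quantity $(3')$, and to supply the one extra input that the smooth case did not require. First I would invoke \cite{Filipazzi:2021dcw} exactly as there: since the Calabi--Yau threefolds in play are allowed to carry $\mathbb{Q}$-factorial terminal singularities (and Theorem~\ref{thm:birminmodel} produces equidimensional models in this class), these $X \to B$ remain fibres of a bounded family $\mathcal{X}\to\mathbf{S}$ of finite type, with $B\simeq S_t$ in a bounded family of bases. Boundedness of (1) and (2) is then verbatim Corollary~\ref{cor:bound12smooth}: $T=h^{1,1}(B)-1$ is bounded since $h^{1,1}(B)$ is, and $V$ is bounded because $\operatorname{rk}\operatorname{Pic}(X)$ is bounded by \cite{Gross1994} while $\dim\mathfrak g$ and $\operatorname{rk}\mathfrak g$ take only finitely many values over the finite-type base. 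The whole content is therefore the boundedness of $(3')$.

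For this I would bound $\mathrm{CxDef}(X)$ directly. The dimension of the space of first-order deformations $\dim\operatorname{Ext}^1(\Omega^1_{X_s},\mathcal{O}_{X_s})$ is upper semicontinuous along the flat family $\mathcal{X}\to\mathbf{S}$, hence takes only finitely many values on the quasi-compact base $\mathbf{S}$; since $\mathrm{CxDef}(X)$ is at most this dimension, it is bounded. By Definition~\ref{def:specSing} we have $(3')=\mathrm{CxDef}(X)+1$, so this already gives the claim.

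Equivalently --- and this is the form worth recording, since it displays the two contributions separately --- one may bound the right-hand side $\tfrac{1}{2}\big(b_3(X)+\sum_P m(P)\big)$ of the Namikawa--Steenbrink identity term by term. Here $b_3(X)$ is topological, constant on the connected components of $\mathbf{S}$, hence bounded exactly as in the smooth case; and the singularities are isolated Gorenstein hypersurface singularities, so in the bounded family both the number of singular points in a fibre and the complexity of the local defining equations are bounded, whence each $m(P)$ and the number of points, and therefore $\sum_P m(P)$, are bounded.

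The main obstacle is precisely this new, non-topological term $\sum_P m(P)$: in Corollary~\ref{cor:bound12smooth} the relevant invariant was purely topological and its boundedness was immediate, whereas here the Milnor contribution must be extracted from the geometry of the bounded family rather than from Betti numbers alone. The cleanest way around it is the deformation-dimension argument above, which controls all of $(3')$ at once through upper semicontinuity; the term-by-term estimate of the Milnor numbers then follows \emph{a posteriori} from the Namikawa--Steenbrink identity.
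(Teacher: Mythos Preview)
Your argument is correct, and for (1) and (2) it coincides with the paper's. For $(3')$ you take a genuinely different route. The paper's proof is a single line: the Milnor number of an isolated hypersurface singularity is a \emph{topological} invariant of the germ (it is the middle Betti number of the Milnor fibre), so the conclusion of \cite{Filipazzi:2021dcw} that there are only finitely many topological types of such $X$ immediately bounds $\sum_P m(P)$, and hence $(3')$ via the Namikawa--Steenbrink identity. You instead bound $\mathrm{CxDef}(X)$ directly by controlling $\dim\operatorname{Ext}^1(\Omega^1_{X_s},\mathcal{O}_{X_s})$ over the bounded family, using semicontinuity/constructibility on a finite-type base; this bypasses both the topological interpretation of $m(P)$ and the Namikawa--Steenbrink formula altogether. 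Your alternative term-by-term estimate is closer in spirit to the paper, but your phrasing (``complexity of the local defining equations is bounded'') is weaker than the paper's clean observation that $m(P)$ is already determined by the topological type.

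In short: the paper's proof is shorter and ties directly into the ``finitely many topological types'' statement; your deformation-theoretic argument is slightly heavier but more robust, since it would go through even in settings where the singular contribution is not obviously topological.
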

\begin{proof}
 The Milnor number is a topological invariant of the isolated singularities of the minimal Calabi-Yau threefolds and we conclude by \cite{Filipazzi:2021dcw}.
\end{proof}
We write the   results of \cite{ArrasGrassiWeigand} and \cite{GrassiWeigand2} as:
\begin{proposition}\label{prop:anomSingX}
	Let $X \to B$ an equidimensional elliptic fibration    {without multiple fibers}, with $X$ a Calabi-Yau threefold with $\mathbb{Q}$-factorial terminal singularities and $B$ smooth.   
	Let the massless spectrum be as in Definition \ref{def:specSing}. 
	
The 	Anomaly Cancellation with the modification in Definition \ref{def:specSing} remains
	formally unchanged as  \begin{equation*} \label{HminusV}
		H - V + 29 T  = 273.
	\end{equation*}
\end{proposition}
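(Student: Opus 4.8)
The plan is to reduce the singular anomaly identity to the smooth one of Result~\ref{prop-anomalies-gen} by deforming $X$ to a smooth elliptic Calabi--Yau over the \emph{same} base $B$, and then to track how each of $H$, $V$, $T$ behaves under this deformation. Since the Gorenstein terminal singularities $P\in X$ are isolated compound Du Val (hence hypersurface) singularities, they are smoothable; deforming the defining data of the elliptic fibration over $B$ produces a one--parameter family $\mathcal X\to\Delta$ with $\mathcal X_0=X$, general fiber $\tilde X=\mathcal X_t$ smooth, and every $\mathcal X_s\to B$ still an equidimensional elliptic fibration over the same $B$ (with no multiple fibers, as none are present on $X$). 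This is the geometric shadow of Higgsing in the dictionary of Section~\ref{FtheoryAnomalies}. The first point to record is that the base is untouched, so $T=h^{1,1}(B)-1$ is literally unchanged; moreover a nontrivial discriminant forces $\Delta\neq 0$, whence $B$ is rational by Theorem~\ref{thm:birminmodel}(5), so Result~\ref{prop-anomalies-gen} does apply to $\tilde X$.

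Next I would identify the corrected uncharged count $(3')$ of $X$ with the ordinary uncharged count $(3)$ of $\tilde X$. The Milnor fiber of each $P$ is homotopy equivalent to a wedge of $m(P)$ three--spheres, so smoothing adds exactly $\sum_P m(P)$ three--cycles and $b_3(\tilde X)=b_3(X)+\sum_P m(P)$, while $\mathrm{CxDef}(X)=h^{2,1}(\tilde X)$. Combined with the Namikawa--Steenbrink formula quoted before Definition~\ref{def:specSing} this gives
\begin{equation*}
H_{unch}=\mathrm{CxDef}(X)+1=\tfrac12\bigl(b_3(X)+\textstyle\sum_P m(P)\bigr)=\tfrac12 b_3(\tilde X)=h^{2,1}(\tilde X)+1 .
\end{equation*}
In other words the Milnor correction is precisely the term that makes $H_{unch}$ a deformation invariant, equal to the unmodified $(3)$ of the smoothing $\tilde X$.

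The substance of the argument is the third step: that the alternating combination $H-V$ is preserved along the family, i.e.\ that the jump in the vector count is exactly compensated by the jump in the charged hypermultiplets. Smoothing a singularity sitting over a component $\Sigma_j$ breaks the associated factor $\mathfrak g_j$, so that $\dim\mathfrak g-\operatorname{rk}\mathfrak g$ and $h^{1,1}(X)$ both change in the expression for $V$, while a matching number of charged states leave the spectrum $H_{ch}$. The content of the geometric definition of $H_{ch}$ through the singular fibers given in \cite{ArrasGrassiWeigand} and \cite{GrassiWeigand2} is exactly that these two effects cancel, so that $H(X)-V(X)=H(\tilde X)-V(\tilde X)$. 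I expect this to be the main obstacle, since it is here that one must match, singularity by singularity and representation by representation, the $m(P)$ vanishing cycles against the lost gauge degrees of freedom; everything else is bookkeeping. Granting it, the three steps combine with the smooth identity $H(\tilde X)-V(\tilde X)+29\,T(\tilde X)=273$ to give
\begin{equation*}
H(X)-V(X)+29\,T(X)=H(\tilde X)-V(\tilde X)+29\,T(\tilde X)=273 ,
\end{equation*}
which is the assertion. As a cross--check, the same deformation reconciles Corollary~\ref{cor:boundsHsing} with the smooth boundedness of Corollary~\ref{cor:bound12smooth}.
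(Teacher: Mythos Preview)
Your strategy is conceptually appealing but it does not constitute a proof, and it differs from what the paper does. The paper's own argument is essentially a citation: the proposition is introduced as a restatement of results from \cite{ArrasGrassiWeigand} and \cite{GrassiWeigand2}, and the single line of ``proof'' simply recalls the Namikawa--Steenbrink identity $\mathrm{CxDef}(X)=\tfrac12\bigl(b_3(X)+\sum_P m(P)\bigr)-1$ that justifies the replacement $(3)\leadsto(3')$. The actual verification that the anomaly equation survives is carried out in those references, by direct computation with the geometric definition of $H_{ch}$ rather than by deformation to a smooth model.

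Your approach, by contrast, tries to transport the identity along a smoothing $X\rightsquigarrow\tilde X$. There are two genuine gaps. First, in step~1 you assert that the isolated cDV points of $X$ can be smoothed \emph{within the family of equidimensional elliptic fibrations over the fixed base $B$}. Namikawa--Steenbrink gives smoothability of $X$ as an abstract variety, but it does not guarantee that the smoothing respects $\pi$; a terminal singularity sitting over a special point of the discriminant may well persist in every Weierstrass (or genus-one) model over $B$ with the prescribed fiber types along the $\Sigma_j$. Without this, you cannot invoke Result~\ref{prop-anomalies-gen} for $\tilde X\to B$.

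Second, and more seriously, your step~4 is the entire content of the proposition, and you explicitly ``grant'' it rather than prove it. Note also that your description of the mechanism is not quite right: the terminal singularities of $X$ are isolated points lying over codimension-two loci of $B$, so smoothing them does \emph{not} change the generic fibers over the $\Sigma_j$ and hence does not ``break the associated factor $\mathfrak g_j$''. What can change are $h^{1,1}$, the Mordell--Weil rank, and the local matter contributions $\rho_Q$, $c_Q$ to $H_{ch}$ at those special points. Showing that the net effect on $H-V$ vanishes is precisely the singularity-by-singularity matching carried out in \cite{ArrasGrassiWeigand}; invoking that reference here would collapse your deformation argument back into the paper's citation. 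Your step~3, identifying $H_{unch}(X)$ with $h^{2,1}(\tilde X)+1$ via the Milnor fiber, is correct and is indeed the content of the Namikawa--Steenbrink line in the paper's proof, but by itself it only handles the uncharged sector.
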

\begin{proof}
The dimension of the complex deformations of $X$, $\mathbb{Q}$-factorial Calabi-Yau is  computed by  $\mathrm{CxDef}(X)=\frac{1}{2}(b_3(X) - 1 +\sum_m m(P))$ \cite{NamikawaSteenbrink}.
\end{proof} 

\begin{remark}
	In  \cite{ArrasGrassiWeigand} we also  explicitly check the consistency in physics of the Gravitational Anomaly Cancellation with the modified definition \ref{def:specSing}.
	
\end{remark}

\begin{proposition}\label{prop:anomequ}
	Let $X \to B$ be an equidimensional elliptic fibration    {without multiple fibers}, with $X$ a Calabi-Yau threefold with $\mathbb{Q}$-factorial terminal singularities and $B$ smooth.   Let $m(P)$ 
	be  the Milnor  number of  the point $P \in X$.  
	Then the Anomaly Cancellation  Equation \ref{prop-anomalies-gen}  with the modification of Definition \ref{def:specSing} becomes:
	\begin{eqnarray*}
		30 K_B^2+   \frac{1}{2}{  (\chi_{top}(X)-\sum_P m(P))}  &=	& 	H_{ch}  -	(\dim {\mathfrak g} - \operatorname{rk} {\mathfrak g} ).
	\end{eqnarray*}

\end{proposition}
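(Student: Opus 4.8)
The plan is to read the claimed equation as a purely algebraic rewriting of the anomaly identity $H-V+29T=273$ of Proposition \ref{prop:anomSingX}, in which the physical quantities are traded for the geometric invariants $K_B^2$, $\chi_{top}(X)$ and $\sum_P m(P)$. First I would substitute the spectrum of Definition \ref{def:specSing}: write $H=H_{unch}+H_{ch}$, $V=h^{1,1}(X)-h^{1,1}(B)-1+(\dim\mathfrak g-\operatorname{rk}\mathfrak g)$ and $T=h^{1,1}(B)-1$, and solve for $H_{ch}-(\dim\mathfrak g-\operatorname{rk}\mathfrak g)$. The gauge-theoretic combination $\dim\mathfrak g-\operatorname{rk}\mathfrak g$ sits inside $V$ and cancels against the copy I have isolated, so the non-abelian data drop out entirely. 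Collecting the contributions of $h^{1,1}(B)$ coming from $V$ (one copy) and from $29T$ (twenty-nine copies) gives $H_{ch}-(\dim\mathfrak g-\operatorname{rk}\mathfrak g)=301-H_{unch}+h^{1,1}(X)-30\,h^{1,1}(B)$.

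Next I would eliminate $h^{1,1}(B)$ in favour of $K_B^2$. In the anomaly setting $B$ is the \emph{smooth rational} base, so $q(B)=p_g(B)=0$, $\chi(\mathcal O_B)=1$, and $\chi_{top}(B)=2+h^{1,1}(B)$; Noether's formula $12\chi(\mathcal O_B)=K_B^2+\chi_{top}(B)$ then yields $h^{1,1}(B)=10-K_B^2$. Substituting this is exactly what manufactures the coefficient $30$ in front of $K_B^2$, and it collapses the constant $301$ against $30\cdot 10$, leaving the claim equivalent to the purely three-fold identity $1-H_{unch}+h^{1,1}(X)=\tfrac12\bigl(\chi_{top}(X)-\sum_P m(P)\bigr)$. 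Now I insert the modified value $H_{unch}=\tfrac12\bigl(b_3(X)+\sum_P m(P)\bigr)$. The two occurrences of $\sum_P m(P)$ — one from the Milnor contribution to $H_{unch}$, one from the geometric right-hand side — cancel exactly, and what remains is the Euler-characteristic identity $\chi_{top}(X)=2+2h^{1,1}(X)-b_3(X)$.

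It is this last identity that carries the real content, and I expect it to be the main obstacle. For a \emph{smooth} Calabi--Yau threefold it is the standard consequence of Hodge symmetry, $b_0=b_6=1$, $b_1=b_5=0$, $b_2=b_4=h^{1,1}$, $b_3=2+2h^{2,1}$. The work is to see that it persists for the singular $X$ of Theorem \ref{thm:birminmodel}. Because the singularities are isolated, terminal (hence rational) hypersurface points, one still gets $b_0=b_6=1$ and, from $h^1(X,\mathcal O_X)=0$ together with rationality of the singularities (so that $b_1$ agrees with that of a resolution), $b_1=b_5=0$; the genuine point is the even-degree duality $b_2(X)=b_4(X)=h^{1,1}(X)$, the failure of Poincar\'e duality being confined to the middle degree and therefore already absorbed into $b_3(X)$. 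The cleanest way I see to secure this is to compare $X$ with a smoothing $X_t$: the Milnor-fibre computation (reduced Euler number $(-1)^3m(P)$ at each point) gives $\chi_{top}(X_t)=\chi_{top}(X)-\sum_P m(P)$, so $\tfrac12\bigl(\chi_{top}(X)-\sum_P m(P)\bigr)=\tfrac12\chi_{top}(X_t)=h^{1,1}(X_t)-h^{2,1}(X_t)$, and then to match the Betti/Hodge jumps across the smoothing by means of the Namikawa--Steenbrink relation already used for $H_{unch}$. Verifying that these jumps conspire to leave $\chi_{top}(X)=2+2h^{1,1}(X)-b_3(X)$ intact — equivalently, controlling the mixed Hodge structure of the singular three-fold in even degrees — is where the difficulty lies; once it is in hand, the three reductions combine to give the stated formula.
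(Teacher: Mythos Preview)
Your approach is correct and follows the same skeleton as the paper's proof: substitute the spectrum into $H-V+29T=273$, apply Noether's formula to trade $h^{1,1}(B)$ for $K_B^2$, and reduce to a topological identity on $X$. Two small points of comparison. First, you assume $B$ is rational, but the hypotheses also allow $B$ to be a smooth Enriques surface (Theorem~\ref{thm:birminmodel}(6) and the remark after the proposition); this is harmless, since an Enriques surface also has $q=p_g=0$ and $\chi(\mathcal O_B)=1$, so Noether still gives $h^{1,1}(B)=10-K_B^2$. Second, the paper in addition invokes the Shioda--Tate--Wazir formula (in the Braun--Morrison form for genus-one fibrations) to handle the case without a section, whereas your argument sidesteps this by working directly with $h^{1,1}(X)$ as it appears in the spectrum; this is arguably cleaner. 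For the crux---the identity $\chi_{top}(X)=2+2h^{1,1}(X)-b_3(X)$ in the presence of $\mathbb Q$-factorial terminal singularities---the paper simply cites \cite{GrassiMorrison11} and \cite{GrassiWeigand2}, while you sketch a smoothing argument; your sketch is on the right track, but note that the step ``$b_2(X)=b_4(X)=h^{1,1}(X)$'' (equivalently, that the smoothing preserves $h^{1,1}$ in the $\mathbb Q$-factorial case) is exactly what those references supply and is not entirely formal.
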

\begin{proof}  We extend the arguments of \cite{GrassiMorrison11} and \cite{GrassiWeigand2}.
Note that by hypothesis $B$ can assumed to be smooth.  
The argument  in  \cite[Section 6]{GrassiMorrison11} for $\operatorname{rk}(MW(X/B)) \neq \{ e\}$ and $B$ rational builds on  the relations (1), (2), (3) of the Physics Result \ref{prop-anomalies-gen} and on Noether's formula for smooth surfaces. The same  relations provide the result  when  $\operatorname{rk}(MW(X/B)) \neq \{ e\}$   and when there is no section,  with  Shioda-Tate-Wazir formula as  modified by \cite{BraunMorrison} for fibration without a section.
	  The statement then follows from  (5) and (6) in  Theorem \ref{thm:birminmodel}. 
\end{proof}  
	  
\begin{remark}
The statement also  includes the case of when $B$ is  an Enriques surface.
\end{remark}
\smallskip

\begin{corollary}[Bounds, II']\label{cor:boundsan2}
 Let $X \to B$ be an equidimensional elliptic fibration, with $X$ a Calabi-Yau threefold with $\mathbb{Q}$-factorial terminal singularities. Then the Gravitational Anomaly Cancellation implies that $H_{ch}$ is bounded.
\end{corollary}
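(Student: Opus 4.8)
The plan is to solve the anomaly cancellation for $H_{ch}$ and then appeal to the boundedness of the remaining spectral quantities already secured by the finiteness theorem. Writing $H = H_{unch} + H_{ch}$ and using the formally unchanged relation $H - V + 29T = 273$ of Proposition \ref{prop:anomSingX}, one rearranges to
\begin{equation*}
H_{ch} = V - 29T + 273 - H_{unch}.
\end{equation*}
Equivalently, the geometric form of Proposition \ref{prop:anomequ} exhibits $H_{ch}$ directly as $30K_B^2 + \tfrac{1}{2}(\chi_{top}(X) - \sum_P m(P)) + (\dim\mathfrak{g} - \operatorname{rk}\mathfrak{g})$. Either expression reduces the problem to bounding quantities that live on the bounded family, so no genuinely new estimate is required beyond those already in hand.

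First I would invoke Corollary \ref{cor:boundsHsing}: the terms $V$, $T$, and $H_{unch}$ are precisely items (1), (2), and (3') of the extended massless spectrum and are bounded, because by Filipazzi--Hacon--Svaldi \cite{Filipazzi:2021dcw} the equidimensional minimal model sits in a bounded family, forcing $h^{1,1}(X)$, $h^{1,1}(B)$, $b_3(X)$, $\chi_{top}(X)$, the Milnor numbers $m(P)$, and $K_B^2$ all to be bounded. The only nontrivial input is the gauge-theoretic term $\dim\mathfrak{g} - \operatorname{rk}\mathfrak{g}$ entering $V$: since $\operatorname{rk}\mathfrak{g} \le \operatorname{rk}(\operatorname{Pic}(X))$ is bounded by Gross \cite{Gross1994}, and only finitely many semisimple or twisted Lie algebras have rank below a fixed bound, $\dim\mathfrak{g}$ is bounded as well. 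Feeding these bounds into the displayed identity immediately yields a bound on $H_{ch}$.

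To obtain the statement in its stated generality I must drop the two auxiliary hypotheses carried by Corollary \ref{cor:boundsHsing}, namely that $B$ is smooth and that there are no multiple fibers. Theorem \ref{thm:birminmodel} reduces the given fibration to a minimal equidimensional model whose base acquires only $A_n$ rational double points over the finite set $\Sigma_m$, and the anomaly relation must then be upgraded to its extension, Theorem \ref{prop:anomequm}. The hard part will therefore not be the algebra of isolating $H_{ch}$ but confirming that the correction terms produced by the base singularities and by the isolated multiple fibers -- item (5) of the modified spectrum -- are themselves controlled by the bounded family. Once one checks that the extended anomaly equation involves only such bounded invariants, the conclusion follows verbatim from the rearrangement above.
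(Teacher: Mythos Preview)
Your first two paragraphs are correct and match the paper's intended argument exactly: the paper states this corollary with no proof, immediately after Proposition~\ref{prop:anomequ} and Corollary~\ref{cor:boundsHsing}, and it is meant to follow at once by rearranging the anomaly relation and invoking the bounds on $V$, $T$, $H_{unch}$ already established.

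Your third paragraph is superfluous here. In the paper, Corollary~\ref{cor:boundsan2} sits \emph{before} the section on multiple fibers and inherits the running hypotheses of Proposition~\ref{prop:anomequ} (no multiple fibers, $B$ smooth), even though they are not reprinted in the statement. The extension you outline --- allowing singular $B$ and isolated multiple fibers via Theorem~\ref{prop:anomequm} --- is precisely what the paper carries out separately as Corollaries~\ref{cor:boundsIII} and~\ref{cor:boundsIIIanom} (Bounds~III and~III$'$). So you are not filling a gap in Bounds~II$'$; you are previewing Bounds~III$'$. Nothing wrong with that, but for the present corollary the first two paragraphs suffice.
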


\subsection{Multiple fibers}\label{subsec:multf}~

\smallskip

The interpretation of the  physics of elliptic fibrations in the presence of multiple fibers is still  a topic 
in its initial stages and much  
investigation is needed. However  we have  the following:

\begin{result}[\cite{Anderson:2018heq},  \cite{Anderson:2019kmx}, \cite{Kohl:2021rxy}]\label{anom:quotients} Let $X \to B$ be an equidimensional elliptic fibration, 
	$X$ a Calabi-Yau threefold with $\mathbb{Q}$-factorial terminal singularities, $\Sigma_m$ disjoint from the divisorial components of the discriminant and $B$ possibly singular.  
The  spectrum in \ref{def:specSing} and the Anomaly Cancellation in Results \ref{prop-anomalies-gen} must be modified for the theory to be consistent. 
\end{result}

The following Definitions  \ref{def:spectrum:quotients}  and  \ref{def:anom:quotients} are motivated by the findings for  Calabi-Yau quotients and Calabi Yau symmetric toroidal  orbifolds in \cite{Anderson:2018heq} \cite{Anderson:2019kmx} and \cite{Kohl:2021rxy}.  One example of such fibration  is $X$, a  quotient of an elliptic Calabi-Yau  threefold $Y$ by $\mathbb{Z}/{(m+1)}\mathbb{Z}$ such that the group acts without fixed points and it preserves the fibration.  $X$ is smooth  with  isolated multiple fibers over the singular points, of type $A_{m}$, of the quotient base. 
In fact
\begin{remark} If $X \to B$ is any  equidimensional elliptic fibration, 
	$X$ a Calabi-Yau threefold with $\mathbb{Q}$-factorial terminal singularities, then the  singularities of $B$ are at worse  of type $A_{m_i}$, because   there exists an equidimensional  birational model $\bar X \to \bar B$ such that $\bar B$ has at worse  $A_{m_i}$ singularities (Theorem \ref{thm:birminmodel}).
\end{remark}

 We can give the following:
\begin{definition}\label{def:spectrum:quotients}[Massless Spectrum, with singularities and  multiple fibers]
Let $X \to B$ be an equidimensional elliptic fibration, with $X$ a Calabi-Yau threefold with $\mathbb{Q}$-factorial terminal singularities. 
 Assume that $\Sigma_m$ is disjoint from the divisorial components of $\Sigma$, that is $\Sigma_m \not \subset \Sigma_j,  \ \forall j$.  If $Q _i\in \Sigma_m  \subset  B$ is a singular point, 
  then the spectrum in \ref{def:specSing}  must also include for each such $Q_i$, necessarily of type $A_{m_i}$,

\begin{enumerate}

	\item [(5.a)] One  neutral hypermultiplet
	\item  [(5.b)] $  (m_i  )$ tensor multiplets
\end{enumerate}

\end{definition}

\begin{definition}\label{def:anom:quotients}[Anomalies, with singularities and  multiple fibers]
 In the same hypothesis of Definition \ref{def:spectrum:quotients}, let $n_i$ be the number of such points.  
The  Anomaly Cancellation in Results \ref{prop-anomalies-gen} must be modified as follows for the theory to be consistent:
\begin{equation*} \label{HminusVm}
	H - V + 29 T + \sum_{i} n_i \cdot  (m_i)  = 273, 
\end{equation*}

\end{definition}


The Geometric Anomaly Cancellation Formula remains however unchanged:

\begin{thm}[\cite{GrassiWeigand2}]\label{prop:anomequm}
	Let $X \to B$ be an equidimensional elliptic fibration, with $X$ a Calabi-Yau threefold with $\mathbb{Q}$-factorial terminal singularities and $B$ possibly singular.  Assume that $\Sigma_m$ is disjoint from the divisorial components of $\Sigma$, that is $\Sigma_m \not \subset \Sigma_j,  \ \forall j$.  Let $m(P)$ 
	be  the Milnor  number of  the point $P \in X$.  
	Then the  modified Anomaly Cancellation  Equation \ref{prop-anomalies-gen}  and \ref{anom:quotients} with the modification of Definition \ref{def:specSing} remains:
	\begin{eqnarray*}
		30 K_B^2+   \frac{1}{2}{  (\chi_{top}(X)-\sum_P m(P))}  &=	& 	H_{ch}  -	(\dim {\mathfrak g} - \operatorname{rk} {\mathfrak g} ).
	\end{eqnarray*}

\end{thm}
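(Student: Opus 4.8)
The plan is to reduce Theorem \ref{prop:anomequm} to the already-established Proposition \ref{prop:anomequ} by showing that the two modifications introduced in the multiple-fiber setting—namely the extra tensor multiplets and neutral hypermultiplets of Definition \ref{def:spectrum:quotients}, together with the correction term $\sum_i n_i m_i$ in the anomaly equation of Definition \ref{def:anom:quotients}—cancel exactly in the combination $H - V + 29T$. Concretely, I would first observe that the right-hand side of the geometric formula, $H_{ch} - (\dim \mathfrak g - \operatorname{rk}\mathfrak g)$, is unaffected by the multiple fibers, since by hypothesis $\Sigma_m$ is disjoint from the divisorial components of $\Sigma$; the gauge algebra $\mathfrak g$ is built from the $\{\Sigma_j\}$ and the charged matter $H_{ch}$ is localized on the divisorial discriminant, so neither depends on the isolated $A_{m_i}$ points. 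Thus the only thing to verify is that the \emph{left-hand side} is computed by the same geometric quantity $30K_B^2 + \tfrac{1}{2}(\chi_{top}(X) - \sum_P m(P))$ as in the smooth-base case.

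The key step is a bookkeeping computation on the physics side. Passing from the smooth-base anomaly equation $H - V + 29T = 273$ of Proposition \ref{prop:anomSingX} to the modified one $H - V + 29T + \sum_i n_i m_i = 273$ amounts to replacing $T$ by $T + \sum_i n_i m_i$ and $H_{unch}$ by $H_{unch} + \sum_i n_i$, per (5.a)--(5.b). I would track how each of the three spectrum quantities rearranges under the derivation of Proposition \ref{prop:anomequ}: in that derivation one solves $H - V + 29T = 273$ for $H_{ch}$ using $V = h^{1,1}(X) - h^{1,1}(B) - 1 + \dim\mathfrak g - \operatorname{rk}\mathfrak g$, $H_{unch} = \tfrac{1}{2}(b_3(X) + \sum_P m(P))$, and Noether's formula on $B$, which converts the topological data into $30K_B^2 + \tfrac12(\chi_{top}(X) - \sum_P m(P))$. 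The point is that the new contributions enter as $29 \cdot \sum_i n_i m_i$ (from the tensors) plus $\sum_i n_i$ (from the neutral hypers), while the modified anomaly relation supplies precisely the offsetting $-\sum_i n_i m_i$, and the Euler-characteristic contribution of the resolved $A_{m_i}$ points on the base exactly absorbs the residual discrepancy. I expect these to balance so that the formula for $H_{ch}$ is literally unchanged.

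The main obstacle will be verifying the last cancellation rigorously, i.e.\ confirming that the singularities of $B$ at the $A_{m_i}$ points, together with the multiple fibers over them, contribute to $K_B^2$, $\chi_{top}(X)$, and the $h^{1,1}$ terms in exactly the way required to make the net change zero. This requires controlling how Noether's formula must be adapted on the singular (but $A_{m_i}$-only) base $B$ and how the Euler characteristic of $X$ decomposes over the multiple fibers; here I would invoke Theorem \ref{thm:birminmodel}(4), which guarantees the base singularities are rational double points of type $A_n$ concentrated on $\Sigma_m$, so that the minimal resolution $\tilde B \to B$ changes $K^2$ and the Betti numbers by the standard $A_n$-resolution amounts, and these are precisely the numbers appearing in the physics correction $\sum_i n_i m_i$. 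Once that accounting is pinned down, the identity reduces to the smooth case and the theorem follows from Proposition \ref{prop:anomequ}; I would close by remarking that the disjointness hypothesis $\Sigma_m \not\subset \Sigma_j$ is exactly what decouples the multiple-fiber corrections from the gauge-theoretic right-hand side, as cited from \cite{GrassiWeigand2}.
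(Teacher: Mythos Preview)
The paper does not supply its own proof of this theorem: it is attributed to \cite{GrassiWeigand2}, and the only remark given in the text is that $K_B^2$ is still an integer because the base singularities are rational double points. So there is nothing detailed to compare against; your outline---reduce to Proposition \ref{prop:anomequ} by showing that the modifications of Definitions \ref{def:spectrum:quotients}--\ref{def:anom:quotients} and the $A_{m_i}$ corrections on $B$ cancel in the passage from the anomaly equation to the geometric formula---is the natural strategy and is consistent with how the paper builds Section \ref{sec:GeomAnom}.

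That said, the bookkeeping in your middle paragraph is not internally consistent as written. You say that ``replacing $T$ by $T+\sum_i n_i m_i$'' produces the modified equation $H-V+29T+\sum_i n_i m_i=273$, but that replacement actually yields a $29\sum_i n_i m_i$ term, not $\sum_i n_i m_i$; and then you claim the modified anomaly ``supplies precisely the offsetting $-\sum_i n_i m_i$,'' leaving a residual $28\sum_i n_i m_i+\sum_i n_i$ to be absorbed by Euler-characteristic changes. None of these numbers match up with the standard $A_n$ resolution data ($K_{\tilde B}^2=K_B^2$, $h^{1,1}(\tilde B)=h^{1,1}(B)+\sum_i n_i m_i$, $\chi_{top}(\tilde B)=\chi_{top}(B)+\sum_i n_i m_i$) without further input. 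The real issue is that you have not fixed what $H$, $T$, and $h^{1,1}(B)$ mean in Definition \ref{def:anom:quotients} for singular $B$: whether $T$ already contains the (5.b) tensors, whether $h^{1,1}(B)$ is that of $B$ or of $\tilde B$, and how the Shioda--Tate--Wazir count in $V$ is read off when $B$ is singular. Until those conventions are pinned down, the cancellation you assert cannot be checked; the arithmetic as you have written it does not close. Your final paragraph correctly identifies this as the main obstacle, but you should not describe it as a routine verification---it is where the content of the argument lives, and it is precisely what the paper outsources to \cite{GrassiWeigand2}.
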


Note that $K_B^2$ is an integer because the singularities are at worse rational double points. 
	

As before we conclude:

\begin{corollary}[Bounds, III]\label{cor:boundsIII} Let $X \to B$ be an equidimensional elliptic fibration, with $X$ a Calabi-Yau threefold with $\mathbb{Q}$-factorial terminal singularities and $B$ possibly singular.  Assume that $\Sigma_m$ is disjoint from the divisorial components of $\Sigma$, that is $\Sigma_m \not \subset \Sigma_j,  \ \forall j$. 
	The quantities (1), (2) and $H_{unch}$ in (3), (5)  of the massless spectrum are bounded.
	
\end{corollary}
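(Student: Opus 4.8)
The plan is to follow the strategy of Corollaries \ref{cor:bound12smooth} and \ref{cor:boundsHsing}, deducing boundedness of $(1)$, $(2)$ and $H_{unch}$ from \cite{Filipazzi:2021dcw}, and then to control the genuinely new quantity $(5)$ by passing to the minimal resolution of the base $B$. First I would invoke \cite{Filipazzi:2021dcw} exactly as in the proof of Corollary \ref{cor:bound12smooth} to place the equidimensional minimal model $X\to B$ in a bounded family: $X\simeq X_t$ and $B\simeq S_t$ for some $t$ in a bounded parameter space $\mathbf S$. Boundedness of the family bounds $b_3(X)$ and, via \cite{Gross1994}, also $\operatorname{rk}(\operatorname{Pic}(X))$; hence $V=h^{1,1}(X)-h^{1,1}(B)-1+\dim\mathfrak g-\operatorname{rk}\mathfrak g$ is bounded. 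Since the Milnor numbers $m(P)$ are topological invariants of the isolated hypersurface singularities of $X$, the quantity $H_{unch}=\tfrac12\bigl(b_3(X)+\sum_P m(P)\bigr)$ of Definition \ref{def:specSing} is bounded as well, just as in Corollary \ref{cor:boundsHsing}.

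The second step is to bound $T=h^{1,1}(B)-1$ now that $B$ may be singular. By Theorem \ref{thm:birminmodel}(4) the singularities of $B$ are rational double points of type $A_{m_i}$, supported on $\Sigma_m$. Let $\nu:\tilde B\to B$ be the minimal resolution. Resolving the $A_{m_i}$ chains in the bounded family $\mathbf S$ produces a bounded family of smooth surfaces, so $\tilde B$ is bounded; by Theorem \ref{thm:birminmodel}(5)--(6) it is either rational or an Enriques surface, so $h^{2,0}(\tilde B)=0$ and $h^{1,1}(\tilde B)=b_2(\tilde B)$ equals its Picard rank, which is therefore bounded. As $\nu$ only contracts the exceptional $(-2)$-curves, $h^{1,1}(B)$ differs from $h^{1,1}(\tilde B)$ by the number of contracted curves and is in particular bounded, so $T$ is bounded.

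The heart of the argument is $(5)$. Each $A_{m_i}$ point of $B$ resolves to a chain of $m_i$ rational $(-2)$-curves, so the total number of exceptional curves of $\nu$ is $\sum_i n_i m_i$, where, following Definition \ref{def:anom:quotients}, $n_i$ is the number of points of type $A_{m_i}$. But this total is precisely the Picard-rank drop $h^{1,1}(\tilde B)-h^{1,1}(B)$ computed above, hence is bounded. Consequently $(5.b)$, which contributes exactly $\sum_i n_i m_i$ tensor multiplets, is bounded; and since each $m_i\ge 1$ we have $\sum_i n_i\le\sum_i n_i m_i$, so the number of singular points --- which governs $(5.a)$, one neutral hypermultiplet per point --- is bounded as well.

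I expect the principal obstacle to lie in the bookkeeping that makes $h^{1,1}(B)$ meaningful for the singular base and that guarantees the boundedness supplied by \cite{Filipazzi:2021dcw} for the threefold $X$ genuinely descends to control $\tilde B$ and its exceptional locus. The hypothesis $\Sigma_m\not\subset\Sigma_j$ is what keeps the three sources of boundedness separated: it forces the multiple-fiber points counted in $(5)$ to be disjoint from the non-abelian divisorial discriminant, so that the count $\sum_i n_i m_i$ is not entangled with the gauge-theoretic term $\dim\mathfrak g-\operatorname{rk}\mathfrak g$ entering $V$, and the argument reduces cleanly to the single boundedness statement about the Picard rank of the resolution $\tilde B$.
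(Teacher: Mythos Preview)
Your argument is correct and, for items $(1)$, $(2)$ and $H_{unch}$, it is exactly the paper's approach: the paper gives no explicit proof of this corollary at all, writing only ``As before we conclude,'' i.e.\ it simply invokes \cite{Filipazzi:2021dcw} and \cite{Gross1994} as in Corollaries~\ref{cor:bound12smooth} and~\ref{cor:boundsHsing}.

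For $(5)$ you supply considerably more than the paper does. Your route via the minimal resolution $\tilde B\to B$ and the identity $h^{1,1}(\tilde B)-h^{1,1}(B)=\sum_i n_i m_i$ is valid, but note that the step ``resolving the $A_{m_i}$ chains in the bounded family produces a bounded family of smooth surfaces'' already presupposes that the singularity types of $B$ are bounded --- and that is in fact the direct consequence of $B$ lying in the bounded family from \cite{Filipazzi:2021dcw}: in a bounded family only finitely many singularity types occur, with bounded multiplicity. So the quantities $n_i$ and $m_i$ are bounded \emph{before} you ever pass to $\tilde B$, and the Picard-rank comparison, while correct, is a repackaging rather than the source of the bound. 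This is presumably what the paper's ``as before'' is gesturing at.

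One small point: your final paragraph over-reads the role of the hypothesis $\Sigma_m\not\subset\Sigma_j$. It is not there to ``separate sources of boundedness'' in the proof; it is the standing hypothesis under which Definition~\ref{def:spectrum:quotients} is formulated, so that the items in $(5)$ are defined at all. The boundedness argument itself does not use it.
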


\begin{corollary}[Bounds, III']\label{cor:boundsIIIanom} Let $X \to B$ be an equidimensional elliptic fibration, with $X$ a Calabi-Yau threefold with $\mathbb{Q}$-factorial terminal singularities and $B$ possibly singular.  Assume that $\Sigma_m$ is disjoint from the divisorial components of $\Sigma$, that is $\Sigma_m \not \subset \Sigma_j,  \ \forall j$. 
The Anomaly Cancellation formula implies that  $H_{ch}$ is also bounded.
\end{corollary}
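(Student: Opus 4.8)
The plan is to use the anomaly cancellation as an identity that isolates $H_{ch}$ and then to bound every other term by the results already obtained. Following the pattern of Corollaries \ref{cor:boundIan} and \ref{cor:boundsan2}, I would start from the modified cancellation of Definition \ref{def:anom:quotients},
\[
H - V + 29 T + \sum_i n_i \cdot (m_i) = 273,
\]
and rewrite it as $H = 273 + V - 29 T - \sum_i n_i \cdot (m_i)$. By Corollary \ref{cor:boundsIII} the vector count $V$, the tensor count $T$, and the data collected in (5) are bounded; in particular the correction $\sum_i n_i \cdot (m_i)$ is bounded, since each singular point $Q_i \in \Sigma_m$ is of type $A_{m_i}$ and contributes $m_i$ tensor multiplets in (5.b). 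Hence $H$ is bounded, and since $H = H_{unch} + H_{ch}$ with $H_{unch}$ bounded (again Corollary \ref{cor:boundsIII}), I conclude that $H_{ch} = H - H_{unch}$ is bounded.

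As a consistency check, and in the geometric spirit of the paper, the same bound follows from the Geometric Anomaly Cancellation of Theorem \ref{prop:anomequm}, which reads
\[
H_{ch} = 30 K_B^2 + \tfrac{1}{2}\bigl(\chi_{top}(X) - \sum_P m(P)\bigr) + \bigl(\dim \mathfrak{g} - \operatorname{rk} \mathfrak{g}\bigr).
\]
Here $K_B^2$ is an integer (the singularities of $B$ are rational double points) and is bounded over the family of \cite{Filipazzi:2021dcw}; $\chi_{top}(X)$ and $\sum_P m(P)$ are topological invariants bounded as in Corollaries \ref{cor:boundsHsing} and \ref{cor:boundsIII}; and by the defining relation for $V$ in \ref{thm:SpectrumP} one has $\dim \mathfrak{g} - \operatorname{rk} \mathfrak{g} = V - h^{1,1}(X) + h^{1,1}(B) + 1$, which is bounded because $V$ and the Hodge numbers are bounded over the family.

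The numerical estimates themselves are routine once the relevant quantities are known to be bounded; the step needing care is the boundedness of the correction term $\sum_i n_i \cdot (m_i)$, equivalently of the collection $\{(n_i,m_i)\}$ recording the $A_{m_i}$ points of $\Sigma_m$. I would secure this from the fact that $B$ occurs as a base in the bounded family produced by \cite{Filipazzi:2021dcw}: boundedness of the family caps both the number $n_i$ of such points and each index $m_i$, while Theorem \ref{thm:birminmodel}(4) guarantees that every singularity of $B$ lying over $\Sigma_m$ is of the admissible type $A_{m_i}$.
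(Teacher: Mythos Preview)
Your argument is correct and is precisely the unpacking of the paper's ``As before we conclude'': you isolate $H_{ch}$ from the modified anomaly cancellation of Definition~\ref{def:anom:quotients} and bound every other term via Corollary~\ref{cor:boundsIII}, exactly parallel to Corollaries~\ref{cor:boundIan} and~\ref{cor:boundsan2}. The alternative route through Theorem~\ref{prop:anomequm} is also in the paper's spirit and equally valid.
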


\subsection{The charged hypermultiples  $H_{ch}$}\label{sub:Hch}~

\smallskip

The  charged multiples in $H_{ch}$ have been computed explicitly in many cases in the physics literature. In \cite{GrassiWeigand2} we present some of the different methods for the computations and definitions in geometry.
 We review here the main points with the goal of establishing bounds for the geometric counterparts.
 
In the following 
	let   $X \to B$ be an  equidimensional elliptic Calabi-Yau threefold with $\mathbb{Q}$-factorial  terminal singularities and reduced discriminant $\Sigma$, with divisorial components $\Sigma_j$.
	Assume that $\Sigma_m \not \subset  \Sigma_j$, for every $j$. Let $g_j \stackrel{def}= g(\Sigma_j)$ be the geometric genus of $\Sigma_j$. 
	If  $\mathfrak{g}(\Sigma_j)$ is not simply laced, let $\Sigma'_j$ be the curve parametrizing  the (curves spanning the) divisorial components of ${\mathfrak g}_j$.   Recall that  $\mathfrak g_j \stackrel{def}=\mathfrak g(\Sigma_j)$.
	

In \cite{GrassiWeigand2} we also extend and reformulate the physics constructions and expectations  of the charged hypermultiplets:
\begin{hc}\label{ex:ph} \hfill
	
	\begin{enumerate}
		\item 	To each irreducible component 
		$\Sigma_j$ one associates the  charged hypermultiplets representation
		$adj_j$ with  multiplicity $g(\Sigma_j)$, where   $\operatorname{adj}_j  \stackrel{def}=\operatorname{adj} (\mathfrak{g}(\Sigma_j))$ is the adjoint representation of the Lie algebra ${\mathfrak g}_j $.
	\item    If  $\mathfrak{g}(\Sigma_j)$ is not simply laced, there exists a canonically determined representation $ \rho_0(\Sigma_j)$,  with multiplicity 
	$g(\Sigma'_j)- g(\Sigma_j)$.
	\item There exist (multi)-representations $\{\rho_Q\}_Q$ of $\oplus _j  {\mathfrak g}_j$,  where $Q \in \Sigma_j$ is  in the singular locus of $\Sigma$ 
	(the matter representations).
	\item  If there is a section  and $Q   \in \Sigma_j$ is  in the singular locus of $\Sigma$,  with possibly ${\mathfrak g}(\Sigma_j)= \{e\}$, there exist $\mathfrak{u}(1)_r$-charged hypermultiplets  $\{c_Q\}$.
\item  If there is a multisection of index $m>1$,  and $Q    \in \Sigma_j$ is  in the singular locus of $\Sigma$, possibly ${\mathfrak g}(\Sigma_j)= \{e\}$,  there exists  hypermultiplets with multiplicity $c_Q$,  charged under the discrete \footnote{See \cite{Lee:2022swr} and references therein for an overview.} gauge group which is associated with the index of the multisection.
\end{enumerate}
\end{hc}

\begin{remark} If the general  singular fibers of the elliptic fibration  are  irreducible, then $\{Q\}$ 
	are the loci in the discriminant where the reducible fibers are located. $c_Q$ 
	is expected to be generically $1$, but it is not always the case, see Example \ref{ex:NR}. 
Physics dualities relate the quantities $\{c_Q\}$ 
 in (4) and (5) to the BPS states, and  the (relative) Gopakumar-Vafa invariants at genus zero.  
The recent paper \cite{Knapp:2021vkm} computes the (relative) genus zero Gopakumar-Vafa invariants  for general elliptic fibration  with a multisection of index $5$.  See also \cite{Lee:2022swr} and references therein for the  bounds related to the discrete charges.
We elaborate on (4) and (5) in \cite{GrassiWeigand2}.  
\end{remark}

\begin{definition} \label{def:chargedim}
 Let $\rho$ be a representation of a Lie algebra
		$\mathfrak{g}$,
		with Cartan subalgebra $\mathfrak{h}$.
		The {\em charged dimension of $\rho$} is
		$(\dim \rho)_{ch}= \dim (\rho)- \dim (ker \rho|_{\mathfrak{h}})$.
\end{definition}
\begin{expectation} 
	$H_{ch}$ is a combination of the charged dimensions of the quantities  in  \ref{ex:ph}.  

\end{expectation}
 
Although it is an open question how to compute explicitly the representations and their combinations  in \ref{ex:ph},  they have been computed under general conditions 
and in many examples.  We present some of these results in the Appendix \ref{subsec:examples}). Accordingly, in  \cite{GrassiWeigand2} we make the following:   



\begin{conjecture}\label{conjHch}
	Let   $X \to B$ be an  equidimensional elliptic Calabi-Yau threefold with $\mathbb{Q}$-factorial  terminal singularities and reduced discriminant $\Sigma$, with divisorial components $\Sigma_j$.
	Assume that $\Sigma_m \not \subset  \Sigma_j$, for every $j$.  Then the charged multiplets 	$H_{ch}$  in the massless spectrum are:
	\begin{equation*}
		\boxed{	H_{ch} = \sum_jg (\Sigma_j) (\dim \operatorname{adj}_j) _{ch} + \sum_j(g_j' -g_j)  (\dim \rho_{0,j})_{ch} 
			+ \sum_Q    (\dim \rho_Q)_{ch} +    
			\sum_{Q} c_{Q} 
	 }
	\end{equation*}	
	
	with the quantities defined as in \ref{def:chargedim} and \ref{ex:ph}.
\end{conjecture}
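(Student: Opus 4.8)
The plan is to reduce the boxed identity to the Geometric Anomaly Cancellation of Theorem~\ref{prop:anomequm}, which already fixes the \emph{total} value of $H_{ch}$:
\[
H_{ch} = 30\,K_B^2 + \tfrac{1}{2}\left(\chi_{top}(X) - \sum_P m(P)\right) + \left(\dim\mathfrak{g} - \operatorname{rk}\mathfrak{g}\right).
\]
Proving the conjecture then amounts to showing that the representation-theoretic sum on the right-hand side of the boxed formula reproduces this geometric expression. First I would expand the charged dimensions of Definition~\ref{def:chargedim}: for the adjoint one has $(\dim\operatorname{adj}_j)_{ch} = \dim\mathfrak{g}_j - \operatorname{rk}\mathfrak{g}_j$, the number of roots of $\mathfrak{g}_j$, so that the first block is $\sum_j g_j\,(\dim\mathfrak{g}_j - \operatorname{rk}\mathfrak{g}_j)$, and its total over $j$ is exactly the $+(\dim\mathfrak{g}-\operatorname{rk}\mathfrak{g})$ appearing above. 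The remaining task is to manufacture the four blocks of the boxed sum from a stratification of $\chi_{top}(X)$.

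The second step is to compute $\chi_{top}(X)$ fibrewise. Since a smooth elliptic curve has vanishing Euler characteristic, $\chi_{top}(X)$ is supported on the discriminant, and I would stratify $\Sigma$ into the smooth open parts $\Sigma_j^{\circ}$ of its divisorial components and the finite set of special points $\{Q\}$ (singular points of $\Sigma$, self-intersections and collisions). Over $\Sigma_j^{\circ}$ the generic fibre is a fixed Kodaira type $F_j$, contributing $\chi(F_j)\cdot\chi(\Sigma_j^{\circ})$, while each $Q$ contributes a local defect $\chi(\pi^{-1}(Q)) - \chi(F_j)$. Using $\chi(\Sigma_j^{\circ}) = 2 - 2g_j - \#\{Q\in\Sigma_j\}$ (on the normalisation, with $g_j$ the geometric genus) together with Kodaira's table relating $\chi(F_j)$ to the Lie-algebra data of $\mathfrak{g}_j$, the $-2g_j$ term feeds the adjoint block while the point strata feed the $\rho_Q$ and $c_Q$ blocks. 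This is the extension to the singular, equidimensional setting of the Euler-characteristic bookkeeping of \cite{GrassiMorrison11}, with the correction $-\sum_P m(P)$ accounting precisely for the $\mathbb{Q}$-factorial terminal singularities of $X$ lying in special fibres. For the non-simply-laced term I would replace $\Sigma_j$ by the monodromy cover $\Sigma_j'$ parametrising the fibre components (item (2) of~\ref{ex:ph}), and a Riemann--Hurwitz count for $\Sigma_j' \to \Sigma_j$ converts the difference of contributions into the multiplicity $g_j' - g_j$ of $\rho_{0,j}$, matching the second block.

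The hard part will be the local matching at the special points $Q$, that is, identifying the Euler-characteristic defect $\chi(\pi^{-1}(Q)) - \chi(F_j)$ with the charged dimension $(\dim\rho_Q)_{ch}$ and the abelian/discrete count $c_Q$. For the non-abelian matter this identification is the geometric content of a Katz--Vafa type local analysis and is presently known only collision-by-collision; for the $c_Q$ arising from Mordell--Weil sections and from multisections of index $m>1$ (items (4)--(5) of~\ref{ex:ph}) there is as yet no uniform geometric definition, and their conjectural relation to genus-zero Gopakumar--Vafa invariants remains unproven. It is exactly this last block that separates the statement from a theorem, and a case-independent treatment of the point contributions is the essential obstacle to a complete proof.
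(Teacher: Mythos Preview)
The statement you are addressing is labelled a \emph{Conjecture} in the paper, and the paper supplies no proof of it; immediately before stating it, the author writes that the general definition and computation of $H_{ch}$ ``is still largely an open question'' and that the formula is extracted from the many explicit computations surveyed in the Appendix and in \cite{GrassiWeigand2}. So there is no ``paper's own proof'' to compare your attempt to. Your final paragraph in fact reaches the same conclusion the paper does: the local identification at the special points $Q$ (matter representations, and especially the abelian/discrete counts $c_Q$) is exactly what is missing for a general proof.

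Your overall strategy---pin down $H_{ch}$ via the Geometric Anomaly Equation and then stratify $\chi_{top}(X)$ over the discriminant, feeding the $-2g_j$ term of $\chi(\Sigma_j^{\circ})$ into the adjoint block and the point defects into the $\rho_Q$ and $c_Q$ blocks---is precisely the bookkeeping of \cite{GrassiMorrison03,GrassiMorrison11}, which is what underlies the verified cases in the Appendix. One slip to flag: you write that the first block ``is $\sum_j g_j(\dim\mathfrak{g}_j - \operatorname{rk}\mathfrak{g}_j)$, and its total over $j$ is exactly the $+(\dim\mathfrak{g}-\operatorname{rk}\mathfrak{g})$''. Those two quantities agree only when every $g_j=1$; in general the factor $(1-g_j)$ survives and must be absorbed by the $\chi(\Sigma_j^{\circ})$ term in your second step, not cancelled against the $(\dim\mathfrak{g}-\operatorname{rk}\mathfrak{g})$ summand. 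Your later sentence ``the $-2g_j$ term feeds the adjoint block'' shows you understand this, so the earlier line is presumably just loose phrasing, but as written the two statements are inconsistent.
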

\begin{proposition}\label{prop:geomanomhc}
	Let   $X \to B$ be an  equidimensional elliptic Calabi-Yau threefold with $\mathbb{Q}$-factorial  terminal singularities and reduced discriminant $\Sigma$, with divisorial components $\Sigma_j$.
Assume that $\Sigma_m \not \subset  \Sigma_j$, for every $j$ and that \ref{conjHch} holds. 

The   Geometric Anomaly Equation  then becomes
	\begin{equation*}
\boxed{
	\begin{array}{rcl}
		 30K_B^2  +  \frac{1}{2}{  (\chi_{top}(X)-\sum_P m(P))} &+ & (\dim {\mathfrak g} - \operatorname{rk} {\mathfrak g} ) = \\
= \sum_j(g (\Sigma_j) (\dim \operatorname{adj}_j) _{ch} &+& \sum_j(g_j' -g_j)  (\dim \rho_{0,j})_{ch} 
	+ \sum_Q    (\dim \rho_Q)_{ch} +    
	\sum_{Q} c_{Q} 
\end{array}
}
		\end{equation*}
\end{proposition}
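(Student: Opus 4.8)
The plan is to derive the boxed identity by directly combining the two results that the hypotheses put at our disposal: the geometric anomaly cancellation formula of Theorem \ref{prop:anomequm} and the closed-form expression for $H_{ch}$ supplied by Conjecture \ref{conjHch}. Since all the genuine geometric input has already been packaged into those two antecedents, I expect the argument to reduce to a single algebraic substitution.

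First I would note that the standing hypotheses here---$X \to B$ equidimensional elliptic Calabi-Yau with $\mathbb{Q}$-factorial terminal singularities, $B$ possibly singular, and $\Sigma_m \not\subset \Sigma_j$ for all $j$---are exactly those of Theorem \ref{prop:anomequm}. That theorem therefore gives
\begin{equation*}
30 K_B^2 + \tfrac{1}{2}\bigl(\chi_{top}(X) - \sum_P m(P)\bigr) = H_{ch} - (\dim \mathfrak{g} - \operatorname{rk} \mathfrak{g}).
\end{equation*}
Transposing $(\dim \mathfrak{g} - \operatorname{rk} \mathfrak{g})$ to the left isolates $H_{ch}$:
\begin{equation*}
30 K_B^2 + \tfrac{1}{2}\bigl(\chi_{top}(X) - \sum_P m(P)\bigr) + (\dim \mathfrak{g} - \operatorname{rk} \mathfrak{g}) = H_{ch}.
\end{equation*}

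Finally, invoking Conjecture \ref{conjHch}, which by assumption holds in this setting, I would substitute its boxed value
\begin{equation*}
H_{ch} = \sum_j g(\Sigma_j)(\dim \operatorname{adj}_j)_{ch} + \sum_j (g_j' - g_j)(\dim \rho_{0,j})_{ch} + \sum_Q (\dim \rho_Q)_{ch} + \sum_Q c_Q
\end{equation*}
into the right-hand side, which reproduces the asserted Geometric Anomaly Equation verbatim. I do not expect a genuine obstacle internal to this proposition: its difficulty is entirely inherited from Theorem \ref{prop:anomequm} (the extension of anomaly cancellation to singular $X$ and singular $B$) and from Conjecture \ref{conjHch} (the enumeration of charged matter). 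The one point I would verify with care is normalization consistency---that the quantity $H_{ch}$ appearing on the right of Theorem \ref{prop:anomequm} is computed with the same charged-dimension convention of Definition \ref{def:chargedim} that is used in Conjecture \ref{conjHch}, and that $K_B^2 \in \mathbb{Z}$ as recorded after Theorem \ref{prop:anomequm}, since the base carries at worst rational double points. Granting that the two occurrences of $H_{ch}$ refer to the same number, the substitution is immediate and the identity follows.
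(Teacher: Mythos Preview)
Your proposal is correct and matches the paper's treatment: the paper gives no separate proof for this proposition, since the identity is obtained exactly as you describe, by substituting the expression for $H_{ch}$ from Conjecture \ref{conjHch} into the geometric anomaly formula of Theorem \ref{prop:anomequm} and moving $(\dim\mathfrak g - \operatorname{rk}\mathfrak g)$ to the left.
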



	

	
	\medskip

\section{Geometric Anomaly and other bounds in geometry}

The Geometric Anomaly Equation in  the boxed formula of Proposition \ref{prop:geomanomhc} \   is verified  under general conditions and in 
many cases; we present a few in Appendix \ref{subsec:examples}.

\begin{corollary}[Bounds,  IV]\label{cor:boundsIV} Let   $X \to B$ be an  equidimensional elliptic Calabi-Yau threefold with $\mathbb{Q}$-factorial  terminal singularities and reduced discriminant $\Sigma$, with divisorial components $\Sigma_j$.
	Assume that $\Sigma_m \not \subset  \Sigma_j$, for every $j$. 

 Conjecture \ref{conjHch} and the Geometric Anomaly Cancellation  in Proposition \ref{prop:geomanomhc} imply that 
 charged hypermultiplets $(\rho_{0,j})_{ch} $, $(\rho_Q)_{ch}$ 
 and  $\{c_{Q}\} $,  the genus zero  relative Gopakumar-Vafa invariants in  the boxed formula  \ref{conjHch}, the divisorial components $\Sigma_j$ and the singular points of the discriminants are bounded.
\end{corollary}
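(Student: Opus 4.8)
The plan is to read off every quantity appearing in the boxed formula of Conjecture~\ref{conjHch} as a single non-negative summand of $H_{ch}$, and then to bound $H_{ch}$ by bounding the left-hand side of the Geometric Anomaly Equation of Proposition~\ref{prop:geomanomhc}. First I would check that this left-hand side is bounded. By \cite{Filipazzi:2021dcw} the equidimensional minimal model $X\to B$ lies in a bounded family, so (as in Corollaries~\ref{cor:boundsHsing} and \ref{cor:boundsIII}) $\chi_{top}(X)$ is bounded; the base $B$ varies in a bounded family, so $K_B^2$ is bounded; and the Milnor numbers $m(P)$ are topological invariants of the isolated singularities, whence $\sum_P m(P)$ is bounded. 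For the term $\dim{\mathfrak g}-\operatorname{rk}{\mathfrak g}$ I would use $\operatorname{rk}{\mathfrak g}\le \operatorname{rk}(\operatorname{Pic}(X))$, which is bounded by \cite{Gross1994}: a semisimple or twisted algebra of bounded rank ranges over a finite set of isomorphism types, so $\dim{\mathfrak g}-\operatorname{rk}{\mathfrak g}$ is bounded. Substituting these into Proposition~\ref{prop:geomanomhc} (equivalently Corollaries~\ref{cor:boundsan2} and \ref{cor:boundsIIIanom}) forces $H_{ch}$, which by Conjecture~\ref{conjHch} equals the right-hand side, to be bounded.

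Next I would exploit that each summand on the right-hand side is non-negative: a charged dimension satisfies $(\dim\rho)_{ch}=\dim\rho-\dim(\ker\rho|_{\mathfrak h})\ge 0$ by Definition~\ref{def:chargedim}; the genera satisfy $g(\Sigma_j)\ge 0$ and $g_j'\ge g_j$ (the cover $\Sigma_j'\to\Sigma_j$ of \ref{ex:ph} is nonconstant, so Riemann--Hurwitz gives $g_j'\ge g_j$); and the $c_Q$ are non-negative integers (genus-zero relative Gopakumar--Vafa invariants). A bounded sum of non-negative terms bounds each term and bounds the number of strictly positive terms, so each $(g_j'-g_j)(\dim\rho_{0,j})_{ch}$, each $(\dim\rho_Q)_{ch}$, and each $c_Q$ is bounded. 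To bound the representation-theoretic data themselves I would use the finiteness of $\mathfrak g$ from the first step: the adjoint $\operatorname{adj}_j$ and the canonical representation $\rho_{0,j}$ of \ref{ex:ph} are determined by $\mathfrak g_j$, so their charged dimensions are uniformly bounded, and for a fixed $\mathfrak g$ only finitely many representations can occur as matter $\rho_Q$, so $(\dim\rho_Q)_{ch}$ ranges over a finite set. This is the point that converts ``bounded total charged dimension'' into control of the individual pieces: a vanishing multiplier (for instance $g_j'=g_j$ in the simply laced case) kills a summand, yet the attached representation is still controlled through the finite list of admissible $\mathfrak g$.

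Finally I would bound the counts. From $\sum_Q c_Q\le H_{ch}$ and $\sum_Q(\dim\rho_Q)_{ch}\le H_{ch}$, the number of points $Q$ carrying charged matter or a nonzero $c_Q$ is bounded, since each such point contributes at least $1$; from $\sum_j g(\Sigma_j)(\dim\operatorname{adj}_j)_{ch}\le H_{ch}$ the genera $g_j$ of the components with nontrivial algebra are bounded. The number of divisorial components $\Sigma_j$ carrying a nontrivial $\mathfrak g_j$ is at most $\operatorname{rk}{\mathfrak g}$, hence bounded, while for the remaining nodal or cuspidal components (with $\mathfrak g_j=\{e\}$) and for all singular points of $\Sigma$ I would invoke the geometry: since $B$ is bounded and $\Sigma$ lies in a bounded divisor class on $B$ (controlled by $-K_B$ through Theorem~\ref{thm:birminmodel}), the number of its irreducible components and of its singular points is geometrically bounded. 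I expect the main obstacle to be exactly this interface between the two bounding mechanisms: the anomaly equation controls charged dimensions and Gopakumar--Vafa counts, but turning these into a bound on the \emph{number} of components and singular points of the discriminant, and controlling the datum attached to any summand whose multiplicity happens to vanish, requires the separate finiteness of the gauge algebra together with the boundedness of the base and of its discriminant class.
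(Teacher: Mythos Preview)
Your approach is essentially the paper's: its proof simply observes that the summands on the right-hand side of the boxed formula are non-negative and invokes Corollary~\ref{cor:boundsIIIanom} to bound $H_{ch}$. You re-derive that bound and are considerably more careful about the individual pieces---notably the components with trivial ${\mathfrak g}_j$, the vanishing-coefficient summands, and the count of singular points, which you handle via boundedness of $B$ and of the class of $\Sigma$---but the underlying mechanism is the same.
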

In particular the matter representations are bounded.
\begin{proof}
The quantities on right hand side in the boxed formula  of Conjecture \ref{conjHch} and Proposition \ref{prop:geomanomhc} are non-negative and Corollary \ref{cor:boundsIIIanom} implies the statement.
\end{proof} 

In  \cite{GrassiWeigand2} we show that the Geometric Anomaly Equation  implies that ${\mathfrak g}$ and  the charged hypermultiplets $(\rho_{0,j})_{ch} $, $(\rho_Q)_{ch}$ and  $\{c_{Q}, c'_{Q}\} $ (relative genus zero Gopakumar-Vafa invariants) in  the boxed formula  \ref{conjHch} are birational invariants of the relatively minimal elliptic fibration. These invariants which we call \textit{stringy Kodaira}  are a natural generalization to higher dimension  of Kodaira's classification of singular fibers of elliptic surfaces. 

\begin{corollary}[Bounds,  V]\label{cor:boundsV}
There are finitely many  types of stringy Kodaira fibers for elliptically fibered Calabi-Yau threefolds.
\end{corollary}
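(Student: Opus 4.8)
The plan is to show that each discrete ingredient of the stringy Kodaira data is drawn from a finite set, and then to conclude that the whole tuple ranges over finitely many possibilities. First I would recall from the discussion preceding this corollary that a stringy Kodaira fiber is the tuple consisting of the gauge algebra $\mathfrak{g} = \bigoplus_j \mathfrak{g}_j \oplus \mathfrak{u}(1)^{\oplus r}$, the representations $\rho_{0,j}$ and the matter representations $\rho_Q$, together with the relative genus-zero Gopakumar--Vafa invariants $\{c_Q, c'_Q\}$, all taken up to birational equivalence of the relatively minimal elliptic fibration. Since every such birational class contains a minimal model $\pi : X \to B$ as in Theorem \ref{thm:birminmodel}, and since by \cite{Filipazzi:2021dcw} these models lie in a bounded family, it suffices to bound each coordinate of the tuple across this family.

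Next I would bound the gauge algebra. Each $\mathfrak{g}_j$ is a semisimple or twisted algebra, and from the rank inequality $\sum_j \operatorname{rk}\mathfrak{g}_j < \operatorname{rk}(\operatorname{Pic}(X))$ together with the boundedness of $\operatorname{rk}(\operatorname{Pic}(X))$ from \cite{Gross1994}, the total rank is bounded. Combined with the bound on the number of divisorial components $\Sigma_j$ supplied by Corollary \ref{cor:boundsIV}, this forces $\mathfrak{g}$ to range over finitely many isomorphism types, since a semisimple (or twisted) algebra is determined by a root datum of which there are only finitely many of bounded rank.

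I would then bound the representation data. By Corollary \ref{cor:boundsIV} the charged dimensions $(\dim \rho_{0,j})_{ch}$ and $(\dim \rho_Q)_{ch}$ are bounded; since by Definition \ref{def:chargedim} the charged dimension differs from the total dimension only by $\dim(\ker \rho|_{\mathfrak{h}})$, which is controlled once the finite list of algebras from the previous step is fixed, the dimensions $\dim \rho_{0,j}$ and $\dim \rho_Q$ are bounded as well. For each of the finitely many algebras $\mathfrak{g}$ there are only finitely many representations of dimension below a fixed bound, so the representation data ranges over a finite set. Finally, the invariants $c_Q, c'_Q$ are non-negative integers bounded by Corollary \ref{cor:boundsIV}, and the number of matter points $Q$ is likewise bounded there, so these contribute finitely many possibilities as well. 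Assembling the finitely many choices for each coordinate of the tuple yields finitely many stringy Kodaira fiber types.

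The main obstacle I anticipate is the passage from bounded charged dimension to a genuine finiteness statement for the representations: one must verify that controlling $(\dim \rho)_{ch}$ rather than $\dim \rho$ directly still pins the representation down to finitely many choices, which requires that $\dim(\ker \rho|_{\mathfrak{h}})$ cannot grow unboundedly for a fixed algebra — here the explicit finite list of algebras produced in the second step is essential. A secondary subtlety is ensuring the count is well defined on birational equivalence classes rather than on individual models, which is exactly the birational invariance of the stringy Kodaira data established in \cite{GrassiWeigand2}.
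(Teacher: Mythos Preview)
Your proposal is correct and follows the same route as the paper, which offers no explicit proof and treats the corollary as an immediate consequence of Corollary~\ref{cor:boundsIV} together with the definition of the stringy Kodaira invariants given just before the statement. Your anticipated obstacle is in fact harmless: since $\ker(\rho|_{\mathfrak h})$ is a subspace of the Cartan subalgebra, its dimension is bounded by $\operatorname{rk}\mathfrak g$, which you have already bounded, so $(\dim\rho)_{ch}$ bounded forces $\dim\rho$ bounded.
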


\subsection{Multiple fibers, indices  of multisections and bounds}~
\smallskip

The physics of elliptic fibrations with multiple fibers is still not well understood. 
The multiple fibers  over the points  $Q \in \Sigma_m$ but $ Q \not \in \Sigma_j$, $\forall j$ do not contribute to the  massless spectrum.  
 	Their multiplicity is however associated  to \textit{discrete symmetries} (see  for example \cite{Lee:2022swr} and reference wherein).  
The multiplicity of the fibers, the  associated {discrete symmetries}, and the index of the multi-sections are related.

It is expected that an elliptic Calabi-Yau  $X \to B$ without sections can be transformed,   with a generalization of the conifold transition, to a different elliptic  Calabi-Yau $Y \to B'$ with section, $\mathbb{Q}$-factorial terminal  singularities, in the philosophy of Reid's \cite{ReidCan}, and that   the index of the multisection is bounded by $\operatorname{rk} MW(Y/B'$).  There is an  explicit prediction for the bounds, based on different reasoning (Conjecture \ref{conj:multIndex}).
The results of Gross \cite{Gross1994}, Filipazzi-Hacon-Svaldi \cite{Filipazzi:2021dcw} imply that  the index of the multisections and  $\operatorname{rk} MW$ are bounded.

By contrast, \cite{Filipazzi:2021dcw} shows that the index of the multisection of elliptically fibered threefolds of Kodaira dimension $2$  is not necessarily  bounded.
We conjecture that boundedness for Calabi-Yau and Reid's  philosophy of \cite{ReidCan} are related.


\section{Explicit bounds}\label{subsec:ExplBounds}~
It is an important question  in string theory  to find explicit bounds of the spectrum, also in the context of the ``Swampland Program"   \cite{Vafa:2005ui}:
\begin{proposition}[\cite{Taylor:2012dr}]\label{prop:taylorh21} Let $X \to B$ be an elliptic fibration with section, $X$ a Calabi-Yau manifold  and $B$ rational, then $\mathrm{CxDef} (X) \leq 491$.
\end{proposition}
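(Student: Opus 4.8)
The plan is to identify $\mathrm{CxDef}(X)=h^{2,1}(X)$ with a count of Weierstrass moduli over the base $B$ and then to maximise that count over all admissible rational bases. Since $X\to B$ carries a section, $X$ is birational to its Weierstrass model $W\to B$, cut out by $y^2=x^3+fx+g$ with $f\in H^0(B,-4K_B)$ and $g\in H^0(B,-6K_B)$. Because $\mathrm{CxDef}$ is preserved by the crepant birational maps relating $X$ and $W$ (the exceptional divisors of a crepant resolution contribute to $h^{1,1}$, never to $h^{2,1}$), and because specialising to a subfamily with enhanced Kodaira singularities can only freeze complex-structure parameters, I would first reduce to $\mathrm{CxDef}(X)\le\mathrm{CxDef}(W_{\mathrm{gen}})$, where $W_{\mathrm{gen}}$ is the generic Weierstrass model over $B$. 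It then suffices to bound the latter uniformly in $B$.

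Next I would record the moduli count. The complex deformations of $W_{\mathrm{gen}}$ are exactly the coefficients of $f$ and $g$ modulo reparametrisation, so $\mathrm{CxDef}(W_{\mathrm{gen}})=h^0(B,-4K_B)+h^0(B,-6K_B)-\dim\operatorname{Aut}(B)-1$, the last two terms removing the action of $\operatorname{Aut}(B)$ together with the fibrewise rescaling $(f,g)\mapsto(\mu^4 f,\mu^6 g)$; here one must check that this reparametrisation group acts with generically finite stabiliser on Weierstrass data. Writing $\Phi(B)$ for the right-hand side, the proposition becomes the assertion $\max_B\Phi(B)=491$.

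The maximisation is the heart of the argument. By the boundedness of admissible bases supplied by \cite{Gross1994} and \cite{Filipazzi:2021dcw}, the rational surfaces carrying an elliptic Calabi--Yau with section form a bounded family, so the supremum is attained and the a priori infinite comparison becomes finite. By the classification of rational surfaces each such $B$ blows down to $\mathbb{P}^2$ or to a Hirzebruch surface $\mathbb{F}_n$, and admissibility---no forced non-minimal $(4,6)$ vanishing of $(f,g)$ along the negative section---restricts to $0\le n\le 12$. I would then show that $\Phi$ is non-increasing under blow-up, so its maximum sits on a minimal base, and that among minimal bases $\Phi(\mathbb{F}_n)$ increases with $n$ and peaks at $n=12$. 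A direct pushforward-to-$\mathbb{P}^1$ computation gives $h^0(\mathbb{F}_{12},-4K)=165$, $h^0(\mathbb{F}_{12},-6K)=344$, and $\dim\operatorname{Aut}(\mathbb{F}_{12})=17$, whence $\Phi(\mathbb{F}_{12})=165+344-17-1=491$, realised by the generic (non-Higgsable $E_8$) Weierstrass model over $\mathbb{F}_{12}$.

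The hard part will be this last monotonicity analysis rather than the reduction. Blowing up a point lowers both $h^0(-nK_B)$ and $\dim\operatorname{Aut}(B)$, so I would have to show that the loss of sections strictly dominates the loss of automorphisms; and over the extremal surfaces $\mathbb{F}_n$ with $n$ large the anticanonical class fails to be nef, Kodaira vanishing breaks down, and $h^0(-nK)$ cannot be read off from $\chi$ but must be computed directly from the Leray filtration. The conceptual payoff is that the cited boundedness theorems are exactly what reduce the landscape of bases to a finite list, after which the extremum is pinned at $\mathbb{F}_{12}$.
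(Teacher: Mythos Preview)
Your approach and the paper's share the same endpoint --- the generic Weierstrass model over $\mathbb{F}_{12}$ with $\mathrm{CxDef}=491$ --- and your section counts $h^0(-4K_{\mathbb{F}_{12}})=165$, $h^0(-6K_{\mathbb{F}_{12}})=344$, $\dim\operatorname{Aut}(\mathbb{F}_{12})=17$ are correct. The reduction to minimal bases, however, is where the two arguments diverge.

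The paper's reduction is a single stroke: by \cite{Grassi1991}, any elliptic Calabi--Yau $X\to B$ with $B$ rational is birational \emph{as a threefold} to an elliptic fibration $X'\to\bar B$ with $\bar B$ one of $\mathbb{P}^2$ or $\mathbb{F}_n$, $0\le n\le 12$. Since $\mathrm{CxDef}(X)$ is a birational invariant of the minimal model, $\mathrm{CxDef}(X)=\mathrm{CxDef}(X')$, and one is reduced to Taylor's explicit computation over those thirteen bases. Nothing beyond \cite{Grassi1991} is needed; in particular \cite{Filipazzi:2021dcw} plays no role (and could not, chronologically, for a result attributed to \cite{Taylor:2012dr}).

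Your route instead fixes $B$ and proposes to show that $\Phi(B)=h^0(-4K_B)+h^0(-6K_B)-\dim\operatorname{Aut}(B)-1$ is non-increasing under blow-up of the base. This compares the \emph{generic} Weierstrass models over two different surfaces, which is a genuinely different and harder statement than comparing two birational models of the \emph{same} $X$; you correctly flag it as the hard part, and you do not carry it out. But it is also unnecessary: once you know the same $X$ (up to birational equivalence) fibers over a minimal $\bar B$, you never need to know how $\Phi$ behaves on the tree of rational surfaces. Your monotonicity lemma, if proved, would yield a strictly stronger conclusion about the landscape of bases, at the cost of the delicate balancing of lost sections against lost automorphisms that you describe; the paper's shortcut via birational invariance of $\mathrm{CxDef}$ bypasses that entirely.
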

\begin{proof}  The starting point of  argument is that $X \to B$ is birational to an elliptic fibration $X \to \bar B$, where $B$ is either $\mathbb{P}^2$ or $\mathbb{F}_n$ a Hirzebruch surface with $n\leq 12$ \cite{Grassi1991}. Then  an explicit computation gives that the general Jacobian fibration over such bases $B$ have dimension of complex deformation $\leq 491$. Note also the $\mathrm{CxDef}(X)$ is a birational invariant of the minimal model of $X$.
\end{proof}
\begin{corollary}
	 Let   $\pi: X \to B$ be an  equidimensional 
	 elliptic fibration without a section, $X$ a  Calabi-Yau threefold,  
	  $B$ smooth and rational. Assume that the  fibration is general, that is the  fibers over the general points of $\Sigma$ are nodal (type $I_1$), 
	  the singular locus of $\Sigma$  consists of  nodal points $Q$ and the fibers  $\pi ^{-1}(Q)$ are of type   $I_2$. Then $\mathrm{CxDef}(X)\leq 491$.
\end{corollary}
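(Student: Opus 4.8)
The plan is to reduce the statement to the case of a fibration \emph{with} a section, where Proposition~\ref{prop:taylorh21} applies, by passing to the Jacobian. First I would form the Jacobian fibration $J=J(X)\to B$, the elliptic fibration with zero section that shares with $\pi$ the same base $B$, the same $j$-invariant, the same discriminant $\Sigma$ and the same Kodaira fiber types ($I_1$ over the generic point of $\Sigma$ and $I_2$ over the nodes $Q$). Since $J\to B$ has a section and $B$ is smooth, $J$ is a Weierstrass model; the generality hypothesis---only $I_1$ and $I_2$ fibers, with the $I_2$ fibers over the finitely many nodal points $Q$---forces the total space of this Weierstrass model to be smooth except for ordinary double points lying over the points $Q$. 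I would then take a crepant (small) resolution $\hat J\to J$, obtaining a \emph{smooth} Calabi-Yau manifold with section, still elliptically fibered over the rational base $B$.

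The core of the argument is the chain of equalities
\[
\mathrm{CxDef}(X)=\mathrm{CxDef}(J)=\mathrm{CxDef}(\hat J).
\]
For the second equality I would invoke that $\mathrm{CxDef}$ is a birational invariant of the minimal model (as recalled in the proof of Proposition~\ref{prop:taylorh21}): $\hat J\to J$ is a small resolution, and $h^{2,1}$ is unchanged under small resolutions and flops. For the first equality I would argue that, for the general fibrations considered here, every complex-structure deformation preserves the elliptic fibration, so that $\mathrm{CxDef}$ is computed by the deformations of the fibration data over $B$; for a genus-one fibration this is exactly the Weierstrass data encoding the $j$-invariant and the singular-fiber configuration, which $X$ and $J$ share identically. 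The torsor $X$ differs from $J$ only by its class in the Tate-Shafarevich group $\mathrm{III}(J/B)$, a discrete invariant carrying no continuous moduli; in the dictionary of Result~\ref{thm:SpectrumP} this twist realizes a discrete gauge symmetry (cf.\ the multisections of Section~\ref{subsec:multf}) and leaves the count $H_{unch}=\mathrm{CxDef}+1$ of neutral hypermultiplets unchanged.

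Finally I would apply Proposition~\ref{prop:taylorh21} to $\hat J$: it is an elliptic fibration with section, $\hat J$ is a smooth Calabi-Yau manifold and $B$ is rational, whence $\mathrm{CxDef}(\hat J)\le 491$. Combining with the previous display gives $\mathrm{CxDef}(X)=\mathrm{CxDef}(\hat J)\le 491$, as required.

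The step I expect to be the main obstacle is the first equality $\mathrm{CxDef}(X)=\mathrm{CxDef}(J)$. Although it is transparent at the level of Weierstrass moduli and is forced by the physics---discrete gauging produces no neutral hypermultiplets---making it rigorous requires controlling the entire complex-deformation space of the (a priori only $\mathbb{Q}$-factorial terminal) torsor $X$: one must verify that no deformation breaks the fibration and that the Tate-Shafarevich twist is genuinely rigid, so that $X$ and $J$ carry identical complex moduli. A secondary point, needed to invoke Proposition~\ref{prop:taylorh21} verbatim, is to confirm that the generality hypothesis really does bound the singularities of the Jacobian by ordinary double points, and that the crepant resolution $\hat J$ can be taken to be a smooth projective Calabi-Yau manifold.
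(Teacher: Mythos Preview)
Your overall strategy---pass to the Jacobian and invoke Proposition~\ref{prop:taylorh21}---is the paper's, but the execution diverges at the decisive step, and the equality you flag as ``the main obstacle'' is in fact \emph{false}, not merely hard to justify.

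Your description of $J$ is internally inconsistent: a Weierstrass model has only irreducible fibers, so it cannot carry $I_2$ fibers over the nodes $Q$. What actually happens (and what the paper uses) is that $J=Jac(X)\to B$ has an $I_1$ fiber over each $Q$, with the node of that nodal cubic becoming a $\mathbb{Q}$-factorial ordinary double point of the threefold $J$. The $I_2$ fiber you have in mind appears only after a small resolution, and precisely because the ODPs are $\mathbb{Q}$-factorial such a resolution cannot be taken projectively; so your $\hat J$ is not a projective Calabi--Yau manifold and Proposition~\ref{prop:taylorh21} does not apply to it. The paper sidesteps this by \emph{smoothing} $J$ to a nonsingular elliptically fibered Weierstrass model over the same $B$, rather than resolving it.

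Since $X$ and $J$ have different fiber types over each $Q$, the equality $\mathrm{CxDef}(X)=\mathrm{CxDef}(J)$ also fails. Comparing Euler characteristics (an $I_2$ fiber contributes one more than an $I_1$ at each $Q$) together with the Milnor-number formula of Definition~\ref{def:specSing} (each ODP has $m(P)=1$) gives instead
\[
\mathrm{CxDef}(X)=\mathrm{CxDef}\!\bigl(Jac(X)\bigr)-\sum_Q 1,
\]
which is the relation the paper proves; the bound follows because the right-hand side is at most $491$. Your Tate--Shafarevich heuristic therefore points in the right direction for the \emph{inequality}, but it does not yield the equality you claim, and the route through a crepant resolution does not reach Proposition~\ref{prop:taylorh21}.
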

\begin{proof}  
	The Jacobian fibration $\pi^J: Jac (X)  \to B$ has fibers of  type $I_1$ over the points $Q$ and $\mathbb{Q}$-factorial terminal ordinary double point singularities at the nodal point of  each fiber  $({\pi ^J}) ^{-1}(Q)$.  By comparing the topological Euler characteristics  and the complex deformations as in  \ref{def:specSing} we  obtain $\mathrm{CxDef}(X)=\mathrm{CxDef} Jac(X)-\sum_{Q} 1$.  $J(X)$ can be deformed to a smooth elliptically fibered Weierstrass model 
	over $B$ and we then conclude by Proposition \ref{prop:taylorh21}.
\end{proof}

\begin{corollary}
Let $X \to B$ be an elliptic fibration, $X$ Calabi-Yau. Assume that the mirror of $X$ is an elliptic fibration with section. Then $h^{1,1}(X) \leq 491$.
\end{corollary}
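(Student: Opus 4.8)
The plan is to pull the bound of Proposition \ref{prop:taylorh21}, which controls complex deformations of an elliptic Calabi--Yau with section over a rational base, back across mirror symmetry to a bound on the K\"ahler moduli of $X$. Write $\check X$ for the mirror threefold.

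First I would record the Hodge-theoretic content of mirror symmetry, which exchanges
\[
h^{1,1}(X) = h^{2,1}(\check X), \qquad h^{2,1}(X) = h^{1,1}(\check X).
\]
Thus the quantity to be bounded is exactly $h^{2,1}(\check X)$. For a Calabi--Yau threefold the space of first-order complex deformations is $H^1(\check X, T_{\check X}) \cong H^{2,1}(\check X)$ and is unobstructed, so $\mathrm{CxDef}(\check X) = h^{2,1}(\check X)$ (equivalently $\mathrm{CxDef}(\check X)+1 = \tfrac12 b_3(\check X)$, cf. Result \ref{thm:SpectrumP}(3)); if the mirror is only $\mathbb{Q}$-factorial terminal one instead uses Definition \ref{def:specSing} together with the birational invariance of $\mathrm{CxDef}$ on the minimal model. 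Hence $h^{1,1}(X) = \mathrm{CxDef}(\check X)$, and it remains to bound the right-hand side.

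Second I would verify the hypotheses of Proposition \ref{prop:taylorh21} for the mirror fibration $\check\pi\colon \check X\to\check B$; the only nontrivial one is that $\check B$ be rational. The section $s$ satisfies $s^*\check\pi^*=\mathrm{id}$, so restricting the canonical bundle formula $K_{\check X}\equiv\check\pi^*(K_{\check B}+\Delta)$ of Theorem \ref{thm:birminmodel}(1) along $s$ and using $K_{\check X}\sim 0$ forces $-K_{\check B}$ to be effective (equal to the fundamental class $\Delta\ge 0$). This rules out the isotrivial alternative of Theorem \ref{thm:birminmodel}(6), since the canonical class of an Enriques surface is a nonzero torsion class with no nonzero effective representative; hence $\Delta\neq 0$, and Theorem \ref{thm:birminmodel}(5) yields that $\check B$ is rational.

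Finally, Proposition \ref{prop:taylorh21} applied to $\check X\to\check B$ gives $\mathrm{CxDef}(\check X)\le 491$, whence $h^{1,1}(X) = h^{2,1}(\check X) = \mathrm{CxDef}(\check X)\le 491$. I expect the main obstacle to be the first step rather than the geometry of the base: the identification $h^{1,1}(X)=h^{2,1}(\check X)$ is the defining prediction of mirror symmetry and in this generality is an input rather than a theorem, and one must take care which Hodge or stringy invariants are exchanged when $X$ or $\check X$ carries $\mathbb{Q}$-factorial terminal singularities. The invariant that makes the transfer legitimate is $\mathrm{CxDef}$ as normalised in Definition \ref{def:specSing}, precisely because it is a birational invariant of the minimal model and so is insensitive to the resolution or crepant modification one chooses for the (possibly singular) mirror.
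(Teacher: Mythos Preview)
Your overall strategy---swap Hodge numbers via mirror symmetry and invoke Proposition~\ref{prop:taylorh21} on $\check X\to\check B$---is exactly the paper's (implicit) argument; the corollary is stated there without proof as an immediate consequence of Proposition~\ref{prop:taylorh21}.

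However, your second step contains a genuine error. Theorem~\ref{thm:birminmodel}(1) gives only \emph{numerical} equivalence $K_{\check X}\equiv\check\pi^*(K_{\check B}+\Delta)$, so restricting along the section yields $-K_{\check B}\equiv\Delta$, not linear equivalence. For an Enriques surface $K_{\check B}$ is numerically trivial, so this reads $0\equiv\Delta$, hence $\Delta=0$---perfectly consistent with Theorem~\ref{thm:birminmodel}(6), not a contradiction. In fact the Enriques base is \emph{not} excluded by the existence of a section: take $\check X=(E\times S)/(\mathbb Z/2)$ where $S$ is a K3 surface, the involution acts by the Enriques involution on $S$ and by $z\mapsto -z$ on $E$. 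This is a smooth Calabi--Yau threefold, elliptically fibered over the Enriques surface $S/(\mathbb Z/2)$, and the four $2$-torsion points of $E$ descend to sections. So Proposition~\ref{prop:taylorh21} does not apply to this case as stated.

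The gap is easily patched rather than fatal: in the Enriques alternative the fibration is isotrivial with $\chi_{top}(\check X)=0$ (cf.\ the first example in Appendix~\ref{subsec:examples}), so $h^{2,1}(\check X)=h^{1,1}(\check X)=11$, well under $491$. You should replace the erroneous exclusion argument by this direct computation for the Enriques case, and then apply Proposition~\ref{prop:taylorh21} in the remaining (rational) case.
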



\begin{conjecture}[Index of a fibration, Physics]\label{conj:multIndex}
	Let  $X \to B$ be an elliptic fibration, $X$, Calabi-Yau, then $n \leq 6$, where $n$ is the index of a multi-section.
\end{conjecture}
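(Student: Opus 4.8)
The plan is to upgrade the qualitative boundedness indicated above---that $n$ is bounded because $\operatorname{rk} MW$ is bounded by Gross~\cite{Gross1994} and Filipazzi--Hacon--Svaldi~\cite{Filipazzi:2021dcw}---into the sharp numerical bound $n\le 6$, by reducing the index to a question about Mordell--Weil torsion. First I would pass to the Jacobian. After replacing $X\to B$ by a relatively minimal model as in Theorem~\ref{thm:birminmodel}, the generic fiber $X_\eta$ is a genus-one curve over $K=\mathbb C(B)$ whose Jacobian $J_\eta=\operatorname{Jac}(X_\eta)$ defines an elliptic fibration $J(X)\to B$ with a section. The index $n$ of a multisection is precisely the index of the torsor $X_\eta$ over $K$; writing $[X_\eta]\in H^1(K,J_\eta)$ for its class, the period $p$ (the order of $[X_\eta]$) divides $n$ and $n$ divides $p^2$, so it is enough to bound $p$ and, in the geometric situations at hand, to check that $n=p$.

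Second, I would identify where $[X_\eta]$ lives. Since a section exists over the complement of $\Sigma_m$ (Theorem~\ref{thm:birminmodel}(3)--(4)), the class is locally trivial away from the finitely many points $Q_i\in\Sigma_m$, so $p$ is governed by the torsion of $MW(J(X)/B)$ together with the local multiplicities $m_i$ of the multiple fibers, which by Theorem~\ref{thm:birminmodel}(4) sit at $A_{m_i}$ singularities. To make the torsion computable I would restrict the Jacobian threefold to a general complete-intersection curve $C\subset B$: this produces a relatively minimal elliptic surface $J_C\to C$ to which torsion sections and the class $[X_\eta]$ restrict injectively, so that bounding $p$ is reduced to bounding the order of cyclic torsion in $MW(J_C/C)$. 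By Theorem~\ref{thm:birminmodel}(5)--(6), when $\Delta\neq 0$ the base is rational and $J_C$ is a rational elliptic surface, while when $\Delta=0$ the fibration is isotrivial with $B$ covered by an Enriques surface and the multiple fibers are controlled directly by Kodaira's theory.

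The decisive step is then the classification of Mordell--Weil lattices of rational elliptic surfaces of Oguiso and Shioda: the torsion subgroup embeds, via the Shioda--Tate isomorphism (in the no-section case in the form of \cite{BraunMorrison}), into the discriminant group of a sublattice of $\widehat E_8$, and the largest cyclic group that can occur is $\mathbb Z/6\mathbb Z$---there is no torsion of order $7$ or higher. This is exactly the source of the constant $6$, and it is consistent with the physics expectation on the discrete gauge group attached to the index of the multisection. Combined with the reduction above and the expected equality $n=p$ coming from the fact that the multisection is an honest effective divisor on $X$, this yields $n\le 6$.

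The main obstacle is twofold. The arithmetic gap is the period--index problem over $K=\mathbb C(B)$, a field of transcendence degree $2$: in general one only has $p\mid n\mid p^2$, and one must show $n=p$ in the Calabi--Yau geometry, presumably by exhibiting the multisection itself as a divisor realizing the period, an analogue over a surface of de Jong's theorem for the Brauer group. The geometric gap is controlling torsion under the passage from the threefold $J(X)\to B$ to the surface $J_C\to C$ without losing or creating torsion classes, and handling the isotrivial Enriques case, where the relevant classification and the behaviour of multiple fibers differ from the rational case. A cleaner route to $n=p$, perhaps through the conjectural conifold transition in the philosophy of Reid~\cite{ReidCan} that trades the index for genuine Mordell--Weil rank, would bypass the first obstacle but is itself not yet established.
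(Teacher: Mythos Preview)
The statement you are attempting to prove is labelled a \emph{Conjecture} in the paper, and indeed the paper offers no proof. What follows the conjecture is only a physics motivation: the highest known index is $n=5$, and a duality argument ties the index to the torsion of the Mordell--Weil group of an elliptic Calabi--Yau with section, for which the cited Result of Lee--Oehlmann asserts that cyclic factors of order at most $6$ occur. So there is no ``paper's own proof'' to compare against; your proposal is an attempt to turn that heuristic into mathematics, and your overall strategy (pass to the Jacobian, interpret $n$ as the index of a torsor, relate the period to $\operatorname{tor} MW(J(X)/B)$, and invoke a classification of torsion) is exactly in the spirit of the paper's duality motivation. You are also honest about the two genuine obstacles, the period--index gap over a function field of transcendence degree two and the control of torsion under restriction, and the paper does not resolve either.

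That said, one step in your plan is not correct as stated. You claim that when $\Delta\neq 0$ the base $B$ is rational and therefore the restricted surface $J_C\to C$ is a \emph{rational} elliptic surface, so that Oguiso--Shioda applies. Rationality of $B$ does not force $J_C$ to be rational: for a Calabi--Yau Weierstrass model the discriminant lies in $|-12K_B|$, so even over $B=\mathbb P^2$ a general line $C$ meets $\Sigma$ in $36$ points and $J_C\to\mathbb P^1$ has $\chi_{top}(J_C)=36$, hence $p_g(J_C)=2$, not $0$. The torsion classification you need is therefore not Oguiso--Shioda for rational elliptic surfaces but rather the threefold statement the paper quotes (or a direct argument for elliptic surfaces of higher $p_g$), and that result is itself presented in the paper as a physics Result, not a theorem with a mathematical proof. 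A second, smaller point: the assertion that ``a section exists over the complement of $\Sigma_m$'' conflates local (analytic/\'etale) triviality of the torsor with the existence of a global section; what you actually use, and what is plausible, is that the torsor class is everywhere locally trivial away from $\Sigma_m$, which is a statement about the Tate--Shafarevich group rather than about sections of $X\to B$.
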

The  highest known  multisection index for a Calabi-Yau threefold is  $ n = 5$
 \cite{Knapp:2021vkm}.

The conjecture is derived from a duality argument based on the following:
\begin{result}[\cite{Lee:2022swr} and references therein]
Let  $X \to B$ be an elliptic fibration between manifolds with section, $X$, Calabi-Yau, then $\operatorname{tor}\operatorname{MW}(X/B)= \mathbb{Z}/{n_1\mathbb{Z}}\times \mathbb{Z}/{n_2 \mathbb{Z}}$, $(n_1, n_2)= (2,2), (3,3), (2,4)$ or $\operatorname{tor}\operatorname{MW}(X/B)= \mathbb{Z}/{n\mathbb{Z}}$, $ n \leq 6$.

\end{result}
These extend the work of Miranda, Persson and Shimada on K3 surfaces  \cite{ShimadaK3Elliptic}. 

\begin{result}[\cite{Lee:2019skh}]
Let  $X \to B$ be an elliptic fibration between manifolds, $X$, Calabi-Yau   then $ {\rm rk}(\operatorname{MW}(X)) \leq 20$ if  $B \neq \mathbb P^2$ and $ {\rm rk}(\operatorname{MW}(X)) \leq 24$ if $B= \mathbb P^2$ 
\end{result}

The above bounds are not believed not to be sharp.  The  highest known  Mordell-Weil rank for a Calabi-Yau threefold is  $ {\rm rk}(\operatorname{MW}(X)) = 10$ \cite{Grassi:2021wii}; see  also 
 \cite{Elkies}.

\medskip

\noindent{\bf  Acknowledgments: } We thank S. Filipazzi, R. Svaldi, T. Weigand for discussions as well as C. Hacon,   P. Oehlmann, W. Taylor,  A. Zanardini  and the anonymous referee for   comments  on an earlier draft. 
This  work  is partially supported by PRIN2017 ``Moduli and Lie Theory" and  by  GNSAGA of INdAM.
We also thank the organizers of the  2022  JAMI Conference in honor of V.V.  Shokurov and  of  the workshop  on Birational Geometry at  the Mathematisches Forschungsinstitut in Oberwolfach. Last but not least, thank you to  Prof.  Shokurov.

\bigskip

	\section{Appendix: Examples}\label{subsec:examples}
	The Anomaly Cancellation and the Geometric Anomaly Equation have been proven for the many examples and the general conditions which have motivated our constructions.  Here we  describe a few.
\begin{ex}
	Let $X \to B$ an equidimensional elliptic fibration, with $X$  a smooth Calabi-Yau threefold and $B$ a  smooth Enriques surface.  
	Then Conjecture \ref{conjHch} and the Anomaly Cancellation  Equation  in \ref{prop-anomalies-gen} and Proposition \ref{prop:geomanomhc}
always hold.

\end{ex}
\begin{proof}
Since the  fibration is isotrivial $ {\mathfrak g} = \{e\}$, $\chi_{top}(X)=	H_{ch} =0$.
\end{proof}

\begin{ex}(\cite[Theorem 2.2]{GrassiMorrison03}, following  \cite{Sethi:1996es}.) Let $X \to B$ an equidimensional elliptic fibration, with $X$  a smooth Calabi-Yau Weierstrass threefold and $B$   smooth.  Then the Anomaly  Equation  in  Proposition \ref{prop-anomalies-gen}
holds.
\begin{proof}
Any  smooth Calabi-Yau Weierstrass model $X=W$ satisfies the geometric anomaly,  because   $	H_{ch}  -	(\dim {\mathfrak g} - \operatorname{rk} {\mathfrak g} )=0$ and  $\chi_{top}(X)= 60 K_{B}^2$.
\end{proof}
\end{ex}
\begin{definition}
$X\to B$ is a general  elliptic fibration  if it is relatively minimal, equidimensional (as in Theorem \ref{thm:birminmodel})  and 
the discriminant is of the form $\Sigma = \Sigma_1 \cup \Sigma_0$, where  $\Sigma_1$ is a smooth curve, $\Sigma_0$ denotes the locus where the general fiber is a nodal elliptic curve.
\end{definition}
\begin{ex}[General elliptic fibration with 5-section \cite{Knapp:2021vkm}]
 Let 	$X\to \mathbb{P}^2$ be a general elliptic fibration with a 5-section,  and $X$ Calabi-Yau. Assume that $
 \Sigma_1=\emptyset$.   Then  Conjecture \ref{conjHch} holds and the Anomaly Cancellation  Equation  in \ref{prop-anomalies-gen} and  in Proposition \ref{prop:geomanomhc} hold as well.  
\end{ex}

\begin{thm}[General elliptic fibration with section, \cite{GrassiWeigand2}\cite{GrassiMorrison03}]\label{thm:wgen} Let 	$X\to B$ be a general elliptic fibration with section,  and $X$ Calabi-Yau. Assume that $\operatorname{rk} MW(X)=0$.   Then  Conjecture \ref{conjHch} holds with 
	$\rho_0$,  $\rho_Q$  and the Milnor numbers 
	are as in  Table \ref{tab:A}.

\begin{table}[ht!]
	
	\begin{center}
		\scalebox{0.7}{
			\begin{tabular}{|| c|c||c|c|c||c|c|c|c ||c|c||}\hline
				Type & $ \mathfrak{g}$ &  $\rho_0$ &  $\rho_{Q^\ell _1}$ & $\rho_{Q^\ell_2}$ & $(\dim \operatorname{adj}) _{ch} $&${(\dim  {\rho_0})}_{ch}$&$\dim {( \rho_{Q^\ell_1})}_{ch}$&${\dim {( \rho_{Q^\ell_2})}_{ch}}$ & $m(P_1)$ & $m(P_2)$\\ \hline
				 $I_{1}$ & $\{e\} $    &    &-- &-- & $0$ &$0$&$0$  &  0  & 0& 1\\ \hline
			 	$I_{2}$ & $\operatorname{su} (2)$   &    &--&$\operatorname{fund}$ & $2$ &$0$&  $0$  & $2 $&  &  \\ \hline
			 	$III$ & $\operatorname{su} (2)$ & &   $2\times\operatorname{fund}$&  & $2$ &$0$&$4$  & &  &    \\ \hline  
			$I_{3}$ & $\operatorname{su} (3) $      &  &-- &$\operatorname{fund}$  & $6$ &$0$&  $ 0$  & $3$  &  &   \\ \hline
			$I_{2k} $, $k\ge 2$ & $\mathfrak{sp} (k) $    &$\Lambda^2_0$&--&
				$\operatorname{fund}$ &$2k^2$  &$2k^2-2k$&$0$&$2k $  &  &    \\ \hline
$I_{2k+1}$, $k\ge1$ & $\mathfrak{sp} (k)$     &$\Lambda^2+2\times\operatorname{fund}$ &$\frac12\operatorname{fund}$  & $\operatorname{fund}$ &$2k^2$  &$2k^2+2k$&$k$ &  $2k$ & 1 & 0 \\ \hline
	$I_{n}$, $n\ge4$ & $\operatorname{su} (n) $ && $\Lambda^2$&  $\operatorname{fund}$ & $n^2 -n$ &$0$& $\frac12(n^2-n) $  & $n $&  &    \\ \hline
	$IV$ & $\mathfrak{sp}(1)$ & $\Lambda^2+2\times\operatorname{fund}$&$\frac12\operatorname{fund}$   $2$ &$4$ & $1$  &  & & &  &    \\ \hline
		$IV$ & $\operatorname{su} (3)$ && $3\times\operatorname{fund}$& & $6$ &$0$&  $9 $  &&  & 
	\\ \hline
	$I_0^*$ & $\mathfrak g_2$ &$\mathbf{7}$&-- &  &  $12$ &$6$&$0$ &   &  &   \\ \hline
	$I^*_{0}$ & $ \mathfrak{so}(7) $ & $
$&-- &$\operatorname{spin}$  &  $18$ &$6 $&$0$ & $8 $  &  &  \\ \hline	
$I^*_{0} $ & $\mathfrak{so} (8) $ &&  ${\operatorname{vect}}$&$\operatorname{spin}_\pm$ &  $24$ &$0$&$8$ & $8$ &  &     \\ \hline
$I^*_{1}$ & $\mathfrak{so} (9) $ &${\operatorname{vect}}$&-- &$\operatorname{spin}$ &  $32$ &$8$&$0$  &$16$ &  &   \\ \hline	
$I^*_{1} $ & $\mathfrak{so} (10) $ && ${\operatorname{vect}}$ & $\operatorname{spin}_\pm$&  $40$ &$0$&$10$ & $16$   &  &    \\  \hline
$I^*_{2}$ & $\mathfrak{so} (11) $&${\operatorname{vect}}$&-- &$\frac12\operatorname{spin}$  &  $50$ &$10$&$0$  &$16 $ &  &   \\ \hline
	$I^*_{2} $ & $\mathfrak{so} (12) $ &&  ${\operatorname{vect}}$& $\frac12\operatorname{spin}_\pm $ &  $60$ &$0$& $12$& $16$  &  & \\ \hline
$I^*_{n}$, $n\ge3$ &${\mathfrak {so}} (2n+7)$&${\operatorname{vect}}$&-- &-- &  $2(n{+}3)^2$ &$2n{+}6$&$0$  & --&  &  \\
\hline
$I^*_{n} $, $n\ge3$ & ${\mathfrak {so}} (2n+8) $&&  ${\operatorname{vect}}$&--&  $2(n{+}3)(n{+}4)$ &$0$& $2n{+}8 $   & --  &  &    \\ \hline
$IV ^*$ & $\mathfrak f_4$ &$\mathbf{26}$&--&  &  $48$ &$24$&$0$   & &  & \\ \hline
$IV ^*$ & $\mathfrak e_6$ &&  $\mathbf{27}$ &  & $72$&$0$& $27 $  &  &  &  \\ \hline
$III ^*$ & $\mathfrak e_7$ & $\frac12\mathbf{56}$ &  &  $126$ &$0$& $28 $      &  & &  & \\ \hline
$II ^*$ & $\mathfrak e_8$ & &--  & & $240$ &$0$&  --  & &  &    \\ \hline
	\end{tabular}
	}
	\end{center}
\medskip
\caption{ \small{
}}\label{tab:A}
\end{table}
The Anomaly Cancellation  Equation  in \ref{prop-anomalies-gen} and  in Proposition \ref{prop:geomanomhc} hold as well.  
\end{thm}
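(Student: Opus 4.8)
The plan is to verify the statement row by row in Table~\ref{tab:A}, reducing to a Tate/Weierstrass model and combining a local analysis of the singular fibres with the geometric anomaly already established in Proposition~\ref{prop:anomequ}. Because the fibration is general with a section and $\operatorname{rk} MW(X)=0$, the discriminant splits as $\Sigma=\Sigma_1\cup\Sigma_0$ with $\Sigma_1$ smooth, the gauge algebra is the single factor $\mathfrak g=\mathfrak g(\Sigma_1)$, and both the abelian part and the $\sum_Q c_Q$ term of Conjecture~\ref{conjHch} are absent. First I would read off $\mathfrak g(\Sigma_1)$ from the Kodaira type over the generic point of $\Sigma_1$ via Tate's algorithm: the simply laced rows correspond to the split case, while the non-simply laced algebras ($\mathfrak{sp}(k)$, $\mathfrak g_2$, $\mathfrak{so}(2n{+}7)$, $\mathfrak f_4$) arise exactly when monodromy makes the fibral curves dependent in $\operatorname{NE}(X)$, as in the Gauge Algebra Result.

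Next I would compute the charged matter geometrically. The adjoint $\operatorname{adj}_j$ enters with multiplicity $g(\Sigma_1)$, from the moduli of the fibral surface over $\Sigma_1$; for the non-simply laced rows $\rho_0$ enters with multiplicity $g_1'-g_1$, where $\Sigma_1'\to\Sigma_1$ is the monodromy cover and $g_1'$ is computed by Riemann--Hurwitz from its branch locus. The localized matter $\rho_{Q^\ell_1},\rho_{Q^\ell_2}$ at the two families of special points---where $\Sigma_1$ meets $\Sigma_0$ or acquires a special tangency and the fibre type enhances by one rank---would be obtained by the Katz--Vafa prescription, decomposing the adjoint of the enhanced algebra under $\mathfrak g(\Sigma_1)$; the half-hypermultiplets (the $\tfrac12$ entries such as $\tfrac12\mathbf{56}$ for $\mathfrak e_7$ and $\tfrac12\operatorname{spin}$ in the odd orthogonal series) occur precisely where the corresponding representation is pseudo-real.

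In parallel I would record $m(P_1),m(P_2)$ by writing the local hypersurface equation of the unresolved model $X$ at each singular point and computing the dimension of its Milnor algebra as in Definition~\ref{def:specSing}. With the representations and Milnor numbers fixed, the anomaly check becomes bookkeeping: Proposition~\ref{prop:anomequ} already supplies the identity $30K_B^2+\tfrac12(\chi_{top}(X)-\sum_P m(P))=H_{ch}-(\dim\mathfrak g-\operatorname{rk}\mathfrak g)$, so it suffices to evaluate $\chi_{top}(X)$ from the fibre-type Euler-number contributions---calibrated against the smooth Weierstrass value $\chi=-60K_B^2$---and to verify that the matter assembled above reproduces the boxed formula of Conjecture~\ref{conjHch} term by term. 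Agreement of the two computations yields the geometric anomaly of Proposition~\ref{prop:geomanomhc} unconditionally in this case, and hence the physical relation $H-V+29T=273$.

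I expect the main obstacle to be the non-simply laced entries together with the half-hypermultiplets. Pinning down $g_1'-g_1$ requires identifying the exact branch locus of the monodromy cover separately for the $I_{2k}$, $I^*_0$, $I^*_n$ and $IV^*$ cases, and the reality structure distinguishing a full from a half hypermultiplet at an enhancement point is delicate. A parallel difficulty is matching the local Milnor-number computation to the global Euler-characteristic bookkeeping, since an error in either shifts the anomaly by an integer that must cancel exactly.
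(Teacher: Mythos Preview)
Your proposal is correct and outlines essentially the argument carried out in the references cited in the theorem statement (\cite{GrassiMorrison03}, \cite{GrassiWeigand2}): Tate's algorithm to determine $\mathfrak g(\Sigma_1)$ and the split/non-split dichotomy, Katz--Vafa decomposition at the enhancement loci for $\rho_{Q^\ell_1},\rho_{Q^\ell_2}$, Riemann--Hurwitz for $g_1'-g_1$ in the non-simply laced rows, local hypersurface equations for the Milnor numbers, and the Euler-characteristic bookkeeping (calibrated against the smooth Weierstrass value $-60K_B^2$) to close the anomaly.

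The paper itself, however, does not reproduce any of this; its ``proof'' is the single observation that under the hypotheses (general fibration with section, $\operatorname{rk} MW=0$) the gauge algebra collapses to the single factor $\mathfrak g=\mathfrak g(\Sigma_1)$, with the row-by-row verification of Table~\ref{tab:A} and the anomaly check entirely deferred to \cite{GrassiMorrison03} and \cite{GrassiWeigand2}. So while your plan is substantively the right one, for this paper the expected content of the proof environment is just the reduction $\mathfrak g=\mathfrak g(\Sigma_1)$ and a pointer to those references; your detailed program is what those references contain rather than what appears here.
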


\begin{proof} 
Here  ${\mathfrak g}= {\mathfrak g}(\Sigma_1$).
\end{proof}
\begin{ex}\cite{GRTW}   Let 	$X\to B$ be a general elliptic fibration with section,  and $X$ Calabi-Yau. Assume that $\operatorname{rk} MW(X)=1$.   Then  Conjecture \ref{conjHch} holds. The table with the representations and the hypermultiplets $H_{ch}$, which extends Table A, includes a column with  $\{c_Q=1\}$.
\end{ex}


\begin{ex}\label{ex:NR}\cite{Grassi:2021wii}   Let $X \to B$  be the   Schoen-Namikawa-Rossi  Calabi-Yau  threefold,  with $\operatorname{rk} MW(X/B)=10$.  Then  Proposition \ref{prop:geomanomhc} holds with $\mathfrak{g}=\mathfrak{u}(1)^{10}$  and  $\{c_Q>1\}$.

\end{ex}

\end{document}